\newcommand{\bburl}[1]{\textcolor{blue}{\url{#1}}}
\newcommand{\be}{\begin{equation}}
\newcommand{\ee}{\end{equation}}
\newcommand{\bea}{\begin{eqnarray}}
\newcommand{\eea}{\end{eqnarray}}
\newtheorem{thm}{Theorem}[section]
\newtheorem{conj}[thm]{Conjecture}
\newtheorem{lem}[thm]{Lemma}
\newtheorem{rek}[thm]{Remark}
\numberwithin{equation}{section}
\newcommand{\Mod}[1]{\ \mathrm{mod}\ #1}
\begin{document}

\title{On Even Perfect Numbers II}

\author{H\`ung Vi\d{\^e}t Chu}
\email{\textcolor{blue}{\href{mailto:chuh19@mail.wlu.edu}{chuh19@mail.wlu.edu}}}
\address{Department of Mathematics, University of Illinois at Urbana-Champaign, Urbana, IL 61820, USA}

\subjclass[2010]{11A25}

\keywords{Perfect number, divisor function}

\begin{abstract}
Let $k>2$ be a prime such that $2^k-1$ is a Mersenne prime. Let $n = 2^{\alpha-1}p$, where $\alpha>1$ and $p<3\cdot 2^{\alpha-1}-1$ is an odd prime. Continuing the work of Cai et al. and Jiang, we prove that $n\ |\ \sigma_k(n)$ if and only if $n$ is an even perfect number $\neq 2^{k-1}(2^k-1)$. Furthermore, if $n = 2^{\alpha-1}p^{\beta-1}$ for some $\beta>1$, then $n\ |\ \sigma_5(n)$ if and only if $n$ is an even perfect number $\neq 496$.
\end{abstract}

\maketitle

\section{Introduction and Main Results}
For a positve integer $n$, let $\sigma(n)$ be the sum of the positive divisors of $n$. We call $n$ \textit{perfect} if $\sigma(n) = 2n$. Due to the work of Euclid and Euler, it is well-known that an even integer $n$ is perfect if and only if $n = 2^{p-1}(2^p-1)$, where both $p$ and $2^{p}-1$ are both primes. A prime of the form $2^p-1$ is called a \textit{Mersenne} prime. Up to now, fewer than 60 Mersenne primes are known. Two questions are still open: whether there are infinitely many even perfect numbers and whether there exists an odd perfect number, though various progress has been made. For example, Pomerance \cite{P1} showed that an odd perfect number must have at least $7$ distinct prime factors. For related results, see \cite{P2, P3}. 

Meanwhile, mathematicians have generalized the concept of perfect numbers. Pollack and Shevelev \cite{PS} introduced \textit{$k$-near-perfect} numbers. For $k\ge 1$, a $k$-near-perfect number $n$ is the sum of all of its proper divisors with at most $k$ exceptions. A positive integer $n$ is called \textit{near-perfect} if it is the sum of all but exactly one of its proper divisors. Pollack and Shevelev showed how to construct near-perfect numbers and established an upper bound of $x^{5/6+o(1)}$ for the number of near-perfect numbers in $[1,x]$ as $x\rightarrow \infty$. Li and Liao \cite{LL} gave two equivalent conditions of all even near-perfect numbers in the form $2^\alpha p_1p_2$ and $2^{\alpha}p_1^2p_2$, where $\alpha > 0$ and $p_1, p_2$ are distinct primes. In 2013, Ren and Chen \cite{RC} found all near-perfect numbers with two distinct prime factors and continuing the work, Tang et al. \cite{TRL} showed that there is no odd near-perfect number with three distinct prime divisors. From another perspective, Chen \cite{Ch} defined \textit{$k$-deficient-perfect numbers} and determined all odd exactly $2$-deficient-perfect numbers with two distinct prime divisors. For more beautiful results on near-perfect numbers and deficient-perfect numbers, see \cite{TF, TMF}. 

The present paper focuses on another generalization of perfect numbers by connecting an even perfect number $n$ with the divisibility of $\sigma_k(n)$, where $k\ge 1$ and  
$$\sigma_k(n) \ :=\ \sum_{d|n} d^k.$$
In 2006, Luca and Ferdinands proved that for $k\ge 2$, there are infinitely many $n$ such that $n\ |\ \sigma_k(n)$. In 2015, Cai et al. \cite{CCZ} proved the following theorem.

\begin{thm}\label{CCZ0}
Let $n = 2^{\alpha-1}p$, where $\alpha>1$ is an integer and $p$ is an odd prime. If $n\ |\ \sigma_3(n)$, then $n$ is an even perfect number. The coverse is also true for $n\neq 28$. 
\end{thm}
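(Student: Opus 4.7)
The plan is to convert $n\mid \sigma_3(n)$ into explicit arithmetic conditions on $\alpha$ and $p$ and then force $p=2^{\alpha}-1$. Multiplicativity on coprime factors gives
$$\sigma_3(n)\;=\;\frac{(2^{3\alpha}-1)(p+1)(p^2-p+1)}{7},$$
so $n\mid\sigma_3(n)$ becomes $7\cdot 2^{\alpha-1}\cdot p \,\big|\, (2^{3\alpha}-1)(p+1)(p^2-p+1)$, which I would analyze one prime at a time.

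I would first isolate the anomalous prime $p=7$: then divisibility by $49$ forces $3\alpha$ to be a multiple of $\mathrm{ord}_{49}(2)=21$, hence $7\mid\alpha$, whereas the power-of-$2$ side constrains $\alpha\le 4$. These are incompatible, so no $n=2^{\alpha-1}\cdot 7$ satisfies the divisibility. This neatly accounts for the single exceptional perfect number $n=28$ on the converse side.

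For $p\neq 7$, the factors $2^{3\alpha}-1$ and $p^2-p+1$ are odd and coprime to $p$, so the divisibility collapses to the pair
$$2^{\alpha-1}\mid p+1\qquad\text{and}\qquad p\mid 2^{3\alpha}-1.$$
Writing $p+1 = 2^{\alpha-1}k$ and cubing yields $2^{3(\alpha-1)}k^3\equiv 1\pmod{p}$, which combined with $2^{3\alpha}\equiv 1\pmod p$ gives
$$p\;\big|\;k^3-8\;=\;(k-2)(k^2+2k+4).$$
The key case $k=2$ is $p=2^{\alpha}-1$; standard arguments then force $\alpha$ to be prime, so $p$ is a Mersenne prime and $n$ is an even perfect number. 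Conversely, any such perfect $n$ trivially satisfies both $2^{\alpha-1}\mid 2^{\alpha}=p+1$ and $p\mid 2^{3\alpha}-1$, which gives the converse for every perfect $n\neq 28$.

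The remaining task, which I expect to be the main obstacle, is to rule out $k\neq 2$. The factor $k-2$ is easily dismissed by a size comparison: $p=2^{\alpha-1}k-1$ is too large to divide $k-2$ when $k\neq 2$ and $\alpha\ge 2$. The real work lies with $k^2+2k+4$; using $2^{\alpha-1}k\equiv 1\pmod{p}$, the congruence $p\mid k^2+2k+4$ simplifies after multiplication by $2^{\alpha-1}$ to $p\mid k+2+2^{\alpha+1}$, producing the crucial size bound $2^{\alpha-1}(k-4)\le k+3$. This confines $(\alpha,k)$ to a small explicit rectangle (roughly $\alpha\ge 2$, $k\le 11$), which I would clear by checking directly that each remaining candidate either yields a composite $p$, returns the excluded $p=7$, or simply fails to divide $k^2+2k+4$. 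This sporadic finite check is entirely mechanical but is the least elegant portion of the argument.
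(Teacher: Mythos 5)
Your proposal is essentially correct, but note first that this paper does not prove Theorem \ref{CCZ0} at all: it is quoted from Cai--Chen--Zhang \cite{CCZ}, and the closest argument in the text is the proof of Theorem~\ref{main0}, which sidesteps the hard part of your argument by assuming $p<3\cdot 2^{\alpha-1}-1$, i.e.\ $k\le 2$ in your notation (the paper's $k_1$), and then kills $k=1$ by polynomial division of $\sum_{i=0}^{k-1}x^i$ by $x/2-1$. Your route is genuinely different where it matters: after reducing to $2^{\alpha-1}\mid p+1$ and $p\mid 2^{3\alpha}-1$ and writing $p+1=2^{\alpha-1}k$, you cube $2^{\alpha-1}k\equiv 1\pmod p$ to obtain $p\mid k^3-8=(k-2)(k^2+2k+4)$, which handles \emph{all} $k$ and therefore yields the unrestricted statement that Theorem~\ref{CCZ0} actually asserts. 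This exploitation of the exponent $3$ is also exactly why the argument does not transfer to $\sigma_5$, consistent with the paper's counterexample $\sigma_5(22)\equiv 0\Mod{22}$. One expository slip: ``$2^{3\alpha}-1$ and $p^2-p+1$ are odd and coprime to $p$'' is not what you want for the first factor (you are trying to prove $p$ \emph{divides} it); the correct observation is that $(p+1)(p^2-p+1)=p^3+1$ is coprime to $p$, while $2^{3\alpha}-1$ and $p^2-p+1$ are odd.

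The one point that needs genuine repair is the claimed ``small explicit rectangle.'' Your inequality $2^{\alpha-1}(k-4)\le k+3$ gives $k\le 11$ when $\alpha\ge 2$, but it bounds nothing when $k\le 4$, so the region ``$\alpha\ge 2$, $k\le 11$'' is infinite and cannot be cleared by a literal finite enumeration. The check is nevertheless finite, because in the surviving case $p\mid k^2+2k+4$ the divisor is a fixed positive integer for each $k$, whence $p\le k^2+2k+4\le 147$ and $2^{\alpha-1}=(p+1)/k\le 148$. Concretely, for each $k\in\{1,3,4,\dots,11\}$ every odd prime divisor $p$ of $k^2+2k+4$ either equals the excluded prime $7$ (as for $k=1,4,8,11$) or makes $(p+1)/k$ fail to be a power of $2$ (e.g.\ $k=3$ gives $p=19$ and $(p+1)/k=20/3$). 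With that bound made explicit, your proof is complete.
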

About three years later, Jiang \cite{Ji} improved the theorem as follows. 
\begin{thm}\label{Ji0}
Let $n =2^{\alpha-1}p^{\beta-1}$, where $\alpha, \beta >1$ are integers and $p$ is an odd prime. Then $n\ |\ \sigma_3(n)$ if and only if $n$ is an even perfect number $\neq 28$. 
\end{thm}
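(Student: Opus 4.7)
The plan is to establish the harder (``only if'') direction by reducing to the case $\beta=2$ and then invoking Theorem~\ref{CCZ0}, and to verify the ``if'' direction by a direct computation that isolates the exception $n=28$. The main technical tool throughout is the Lifting the Exponent (LTE) lemma, applied at both the prime $2$ and the odd prime $p$.

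First I would use multiplicativity to write $\sigma_3(n)=\sigma_3(2^{\alpha-1})\sigma_3(p^{\beta-1})$, where $\sigma_3(2^{\alpha-1})=(2^{3\alpha}-1)/7$ is odd and $\sigma_3(p^{\beta-1})\equiv 1\pmod p$. Hence $n\mid\sigma_3(n)$ is equivalent to the two independent conditions
\begin{equation*}
\text{(i)}\ \ 2^{\alpha-1}\mid\sigma_3(p^{\beta-1}),\qquad\text{(ii)}\ \ p^{\beta-1}\mid\frac{2^{3\alpha}-1}{7}.
\end{equation*}
A parity count on (i) forces $\beta$ to be even, since otherwise $\sigma_3(p^{\beta-1})$ is a sum of $\beta$ odd terms, hence odd, contradicting $\alpha>1$.

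The crux of the proof is to rule out $\beta\ge 4$. Applying LTE at the prime $2$ to $(p^3)^{\beta}-1$, and using that $p^2\pm p+1$ are odd, I would derive the exact formula
\begin{equation*}
v_2\bigl(\sigma_3(p^{\beta-1})\bigr)=v_2(p+1)+v_2(\beta)-1,
\end{equation*}
so (i) gives the sharp upper bound $\alpha\le v_2(p+1)+v_2(\beta)$ and hence $2^{\alpha}\le(p+1)\beta$. On the $p$-adic side, let $d:=\mathrm{ord}_p(2)$; then LTE at $p$ gives $v_p(2^{3\alpha}-1)=v_p(2^d-1)+v_p(3\alpha/d)$, which (ii) requires to be at least $\beta-1$ (and $\beta$ in the special case $p=7$, where one factor of $7$ is already absorbed in $\sigma_3(2^{\alpha-1})$). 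In the generic situation $v_p(2^d-1)=1$ this forces a lower bound $\alpha\gtrsim p^{\beta-2}$, incompatible with the logarithmic upper bound $\alpha\le\log_2((p+1)\beta)$ for every $\beta\ge 4$, save for a very short list of small pairs $(p,\beta)$ such as $p=3,\beta=4$ that I would rule out by direct computation. The remaining cases---$v_p(2^d-1)\ge 2$ (Wieferich-type primes) and $p=7$---are dispatched by observing that the $2$-adic bound $\alpha\le v_2(p+1)+v_2(\beta)$ is already too restrictive to allow $p^{\beta-1}$ to fit inside $\sigma_3(2^{\alpha-1})$. Once $\beta=2$ is established, Theorem~\ref{CCZ0} of Cai et al.\ finishes the argument.

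For the ``if'' direction, writing $n=2^{q-1}(2^q-1)$ with $2^q-1$ a Mersenne prime, direct expansion gives
\begin{equation*}
\frac{\sigma_3(n)}{n}=\frac{2\,(2^{2q}+2^q+1)(2^{2q}-3\cdot 2^q+3)}{7}.
\end{equation*}
Since $2^{2q}+2^q+1=(2^{3q}-1)/(2^q-1)$ and $\mathrm{ord}_7(2)=3$, the numerator is divisible by $7$ exactly when $3\nmid q$, covering every prime $q\ne 3$; the sole failure is $q=3$, i.e.\ $n=28$. The hardest step of the whole argument is the $\beta\ge 4$ contradiction: the LTE upper bound on $\alpha$ and the $p$-adic lower bound scale very differently, so one has to balance them carefully and handle a handful of genuinely small cases and Wieferich-type primes without case explosion.
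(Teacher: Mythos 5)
First, a point of comparison: the paper does not actually prove Theorem~\ref{Ji0} --- it is quoted from Jiang \cite{Ji} as background, so there is no in-paper proof to measure yours against. The closest analogue is the proof of Theorem~\ref{main1} (the $k=5$ case), and your strategy is in the same spirit: an exact $2$-adic valuation of $\sigma_k(p^{\beta-1})$ (the paper's Lemmas~\ref{tv}, \ref{tv2}, \ref{sl3} are hand-rolled LTE at $2$) yielding $\alpha\le v_2(p+1)+v_2(\beta)$, followed by a size comparison against $p^{\beta-1}\le\sigma_3(2^{\alpha-1})$. Your reduction to conditions (i)--(ii), the parity argument forcing $2\mid\beta$, the formula $v_2(\sigma_3(p^{\beta-1}))=v_2(p+1)+v_2(\beta)-1$, and the ``if''-direction computation (including the identification of $q=3$ as the unique failure) all check out.

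The gap is in the case you label ``Wieferich-type.'' When $v_p(2^d-1)=e\ge 2$ with $d=\mathrm{ord}_p(2)$, condition (ii) for $\beta=4$ only requires $v_p(3\alpha/d)\ge 3-e$, which for $e\ge 3$ is no condition at all beyond $d\mid 3\alpha$; and the $2$-adic bound then gives only $p^{3}\le 2^{3\alpha}-1<2^{3(v_2(p+1)+2)}\le 64(p+1)^{3}$, which holds for every $p$. So your assertion that the $2$-adic bound is ``already too restrictive'' is not substantiated: nothing in your argument excludes a hypothetical prime with $p^{3}\mid 2^{d}-1$ for a small $d$. No such prime is known, but it cannot be ruled out by size alone, so this case needs a different idea. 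The device the paper uses for $k=5$ (Lemma~\ref{3mod4}) closes exactly this hole: factor $2^{3\alpha}-1=(2^{\alpha}-1)(2^{2\alpha}+2^{\alpha}+1)$ and note that $p$ cannot divide both factors unless $p=3$ (since $2^{\alpha}\equiv 1\Mod p$ forces the second factor to be $\equiv 3\Mod p$); hence $p^{\beta-1}$ divides a single factor, giving $p^{\beta-1}\le 2^{2\alpha}+2^{\alpha}+1$ with $\alpha\le v_2(p+1)+v_2(\beta)$, which genuinely reduces $\beta\ge 4$ to a finite, checkable list. I recommend replacing your LTE-at-$p$ step with this factorization argument (keeping LTE at $2$); the rest of your outline then goes through.
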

These theorems show a beautiful relationship between an even perfect number $n$ and $\sigma_3(n)$. A natural extension is to consider $\sigma_k(n)$ for some other values of $k$. Unfortunately, Theorem \ref{CCZ0} does not hold when $k = 5$ or $7$, for example. A quick computer search gives $\sigma_5(22)\equiv 0\Mod 22$ and $\sigma_7(86)\equiv 0\Mod 86$. However, if we add one more restriction on $p$, the following theorem holds. 
\begin{thm}\label{main0}
Let $k>2$ be a prime such that $2^k-1$ is a Mersenne prime. If $n = 2^{\alpha-1}p$, where $\alpha>1$ and $p<3\cdot 2^{\alpha-1}-1$ is an odd prime. Then $n\ |\ \sigma_k(n)$ if and only if $n$ is an even perfect number $\neq 2^{k-1}(2^k-1)$.  
\end{thm}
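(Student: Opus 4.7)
The plan is to exploit multiplicativity of $\sigma_k$:
$$\sigma_k(n) \;=\; \frac{2^{\alpha k}-1}{2^k-1}\,(1+p^k).$$
The first factor $1+2^k+\cdots+2^{(\alpha-1)k}$ is odd, so the 2-adic content of $\sigma_k(n)$ sits entirely in $1+p^k$. Since $k$ is odd, the 2-adic lifting-the-exponent lemma applied to $p^k-(-1)^k$ will give $v_2(1+p^k)=v_2(p+1)$, which pins down the 2-adic behavior.

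For the forward direction I would assume $n\mid\sigma_k(n)$. The 2-part of the divisibility yields $2^{\alpha-1}\mid p+1$, and the hypothesis $p<3\cdot 2^{\alpha-1}-1$ then forces $p+1\in\{2^{\alpha-1},2^{\alpha}\}$. The case $p=2^{\alpha}-1$ requires $\alpha$ prime (so $p$ is prime), giving the even perfect number $n=2^{\alpha-1}(2^{\alpha}-1)$; the excluded value $n=2^{k-1}(2^k-1)$ corresponds precisely to $\alpha=k$, so any other $\alpha$ delivers the conclusion.

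The hard part will be ruling out $p=2^{\alpha-1}-1$. Writing $q=\alpha-1$ (which must be prime for $p$ to be prime), I would reduce $p\mid\sigma_k(n)$ to $p\mid\sum_{i=0}^{q}2^{ik}$, since $1+p^k\equiv 1\pmod p$. Using that $2$ has order $q$ modulo the Mersenne prime $p=2^q-1$, I split on whether $q=k$ or not. If $q\neq k$, then $\gcd(k,q)=1$ makes $\{ik\bmod q\}_{i=0}^{q-1}$ a complete residue system mod $q$, the first $q$ terms collapse to $2^q-1\equiv 0\pmod p$, and the remaining term is $2^{qk}\equiv 1\pmod p$, so the sum is $\equiv 1$. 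If $q=k$, each term is $\equiv 1\pmod p$, giving residue $k+1$, nonzero since $p=2^k-1>k+1$ for $k\geq 3$. Either subcase contradicts $p\mid\sigma_k(n)$.

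For the converse I would take an even perfect number $n=2^{q-1}(2^q-1)$ with $q\neq k$, observe that $p=2^q-1<3\cdot 2^{q-1}-1$ so $n$ fits the hypothesis, and verify $n\mid\sigma_k(n)$ directly: the 2-part is $v_2(\sigma_k(n))=v_2(p+1)=q\geq q-1$, while for the odd part, $\gcd(2^q-1,2^k-1)=2^{\gcd(q,k)}-1=1$ combined with $(2^q-1)\mid 2^{qk}-1$ yields $(2^q-1)\mid\frac{2^{qk}-1}{2^k-1}$. Together these give $n\mid\sigma_k(n)$, completing the equivalence.
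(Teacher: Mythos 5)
Your proposal follows the paper's skeleton for most of the argument: the same multiplicative decomposition, the same observation that the $2$-part of $\sigma_k(n)$ lives entirely in $1+p^k$ with $v_2(1+p^k)=v_2(p+1)$ (the paper gets this from the factorization $1+p^k=(1+p)\sum_{i=1}^k(-1)^{i+1}p^{k-i}$ rather than quoting lifting-the-exponent), the same reduction to $p+1\in\{2^{\alpha-1},2^{\alpha}\}$ via the hypothesis $p<3\cdot 2^{\alpha-1}-1$, and an essentially identical converse. Where you genuinely diverge is in killing the case $p=2^{\alpha-1}-1$: the paper divides $f(x)=\sum_{i=0}^{k-1}x^i$ by $g(x)=x/2-1$ to conclude $p\mid 2^k-1$, hence $p=2^k-1$, and then invokes its Lemma \ref{f} (whose proof rests on the appendix lemmas) to dispose of $n=2^{\alpha-1}(2^k-1)$. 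Your argument via the order of $2$ modulo the Mersenne prime $2^q-1$ --- collapsing $\sum_{i=0}^{q-1}2^{ik}$ to $2^q-1\equiv 0$ when $\gcd(k,q)=1$ and getting residue $k+1$ when $q=k$ --- is more direct and, as written, does not need Lemma \ref{f} at all. That is a real simplification.

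One point you leave dangling: the forward direction also requires showing that $n\mid\sigma_k(n)$ forces $n\neq 2^{k-1}(2^k-1)$; you must actively rule out the subcase $p=2^{\alpha}-1$ with $\alpha=k$, not merely observe that ``any other $\alpha$ delivers the conclusion.'' The paper does this by citing Lemma \ref{f} at the outset. Your own technique closes the gap in one line: when $\alpha=k$ and $p=2^k-1$, the required divisibility $p\mid\sum_{i=0}^{k-1}2^{ik}$ fails because each term is $\equiv 1\pmod{2^k-1}$, so the sum is $\equiv k\not\equiv 0\pmod{2^k-1}$. Add that sentence and the plan is complete.
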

Theorem \ref{main0} can be considered a generalization of Theorem \ref{CCZ0} as we have a wider range of $k$ with the new restriction on $p$ as a compensation. Interestingly, we $k = 5$, when can generalize Theorem \ref{main0} the same way as Jiang generalized Theorem \ref{CCZ0}. 

\begin{thm}\label{main1} If $n = 2^{\alpha-1}p^{\beta-1}$, where $\alpha, \beta>1$ and $p<3\cdot 2^{\alpha-1} -1$ is an odd prime. Then $n\ |\ \sigma_5(n)$ if and only if $n$ is an even perfect number $\neq 496$.
\end{thm}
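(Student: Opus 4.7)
The plan is to reduce Theorem \ref{main1} to Theorem \ref{main0} with $k=5$ by showing the hypothesis $n\mid\sigma_5(n)$ forces $\beta=2$. For the ``if'' direction, any even perfect number of the form $2^{\alpha-1}p^{\beta-1}$ must equal $2^{q-1}(2^q-1)$ with $2^q-1$ Mersenne prime, so matching exponents yields $\alpha=q$, $\beta=2$, $p=2^q-1$, and the inequality $p<3\cdot 2^{\alpha-1}-1$ reduces to $2<3$ automatically. Theorem \ref{main0} at $k=5$ then gives $n\mid\sigma_5(n)$ since $n\neq 496$.

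For the converse, assume $n\mid\sigma_5(n)$. Since $\sigma_5(2^{\alpha-1})$ is odd and $\sigma_5(p^{\beta-1})\equiv 1\pmod{p}$, multiplicativity splits the condition into $2^{\alpha-1}\mid\sigma_5(p^{\beta-1})$ and $p^{\beta-1}\mid\sigma_5(2^{\alpha-1})$. Parity of $\sigma_5(p^{\beta-1})=1+p^5+\cdots+p^{5(\beta-1)}$ forces $\beta$ even, and applying the Lifting-the-Exponent lemma to $\sigma_5(p^{\beta-1})=(p^{5\beta}-1)/(p^5-1)$ gives
\[
v_2\bigl(\sigma_5(p^{\beta-1})\bigr)=v_2(p+1)+v_2(\beta)-1,
\]
yielding the crucial inequality $u+s\geq\alpha$ where $u:=v_2(p+1)$ and $s:=v_2(\beta)$. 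It suffices to prove $\beta=2$, after which Theorem \ref{main0} with $k=5$ finishes the argument; I argue by contradiction that $\beta\geq 4$ is impossible.

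Combining $u+s\geq\alpha$ with $p+1<3\cdot 2^{\alpha-1}$ yields three subcases: $u=\alpha$ forces $p=2^\alpha-1$, $u=\alpha-1$ forces $p=2^{\alpha-1}-1$, and $u\leq\alpha-2$ forces $s\geq 2$. In the first subcase, $p$ is Mersenne with $\alpha$ prime and $e:=\mathrm{ord}_p(2)=\alpha$, so LTE yields
\[
v_p\bigl(\sigma_5(2^{\alpha-1})\bigr)=v_p(2^\alpha-1)+v_p(5)-[p=31]\leq 1,
\]
which is incompatible with $p^{\beta-1}\mid\sigma_5(2^{\alpha-1})$ for $\beta-1\geq 3$. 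In the second subcase, $p=2^{\alpha-1}-1$ is Mersenne with $\alpha-1$ prime and $e=\alpha-1$; the condition $e\mid 5\alpha$ forces $(\alpha-1)\mid 5$, and the only feasible option $\alpha=6$, $p=31$ gives $v_{31}(\sigma_5(2^5))=0$, while any other $\alpha$ yields $v_p(\sigma_5(2^{\alpha-1}))=0$ directly---both contradict $\beta\geq 4$.

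The third subcase $u\leq\alpha-2$ is the technical heart. Here $p\geq 2^u-1\geq 2^{\alpha-s}-1$ forces $e\geq\log_2(p+1)\geq\alpha-s$, so since $e\mid 5\alpha$ the quotient $5\alpha/e\leq 5\alpha/(\alpha-s)=5+5s/(\alpha-s)$ is a bounded integer. A cyclotomic/LTE expansion gives $v_p(\sigma_5(2^{\alpha-1}))=v_p(2^e-1)+v_p(5\alpha/e)$ when $e\mid 5\alpha$ (and zero otherwise), and for all but a short list of sporadic pairs $(p,\alpha)$---small primes $p\in\{3,5,7\}$ for which $p$ itself can divide the bounded quotient, the two Wieferich primes $p\in\{1093,3511\}$ where $v_p(2^e-1)$ may exceed $1$, and $p=31$ with $\alpha$ highly divisible by $31$---one concludes $v_p(\sigma_5(2^{\alpha-1}))\leq 1$, giving $\beta\leq 2$. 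Each sporadic case is eliminated by a direct size comparison: the exponential 2-adic lower bound $\beta\geq 2^{\alpha-u}$ outpaces the at most linear-in-$\alpha$ upper bound $\beta-1\leq 5\alpha/\log_2 p$ coming from $p^{\beta-1}\leq\sigma_5(2^{\alpha-1})<2^{5\alpha}/31$, together with the explicit value of $\mathrm{ord}_p(2)$ pinning the feasible $\alpha$ to a tiny finite set. The main obstacle of the proof is precisely this bookkeeping: balancing the exponential 2-adic lower bound on $\beta$ against the tight $p$-adic upper bound from $\sigma_5(2^{\alpha-1})$ and systematically clearing the sporadic exceptions.
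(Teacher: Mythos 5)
Your reduction to Theorem \ref{main0} via ``$\beta=2$'' is the same overall strategy as the paper, and your 2-adic setup is sound: the splitting $2^{\alpha-1}\mid\sigma_5(p^{\beta-1})$, $p^{\beta-1}\mid\sigma_5(2^{\alpha-1})$, the parity argument forcing $2\mid\beta$, and the valuation identity $v_2(\sigma_5(p^{\beta-1}))=v_2(p+1)+v_2(\beta)-1$ all match what the paper proves by induction in its appendix lemmas, and your first two subcases ($p=2^\alpha-1$ and $p=2^{\alpha-1}-1$) are handled correctly.

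The gap is in your third subcase, specifically where $s=v_2(\beta)=2$ and $u=v_2(p+1)=\alpha-2$, i.e.\ $\beta=4$ and $n=2^{\alpha-1}p^3$. Here the size comparison you invoke gives nothing: $p^3\approx 27\cdot 2^{3\alpha-6}$ is far below $\sigma_5(2^{\alpha-1})\approx 2^{5\alpha}/31$ for every $\alpha$, so $\alpha$ is \emph{not} confined to a finite set, and you are left needing $v_p(2^{e}-1)+v_p(5\alpha/e)\le 2$ for all large $\alpha$, where $e=\mathrm{ord}_p(2)$. Since $v_p(5\alpha/e)=0$ once $\alpha$ is moderately large and $p\ge 7$, your argument reduces to the assertion $v_p(2^{p-1}-1)\le 2$ for every prime $p$, i.e.\ that no prime satisfies $p^3\mid 2^{p-1}-1$. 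That is an open problem (it is not even known that $1093$ and $3511$ are the only Wieferich primes, let alone that no higher-order ones exist), so this step cannot be completed as written. The paper avoids the issue entirely: in its Lemma \ref{v10} it uses $2^{\alpha-2}\mid p+1$ together with $p<3\cdot 2^{\alpha-1}-1$ to write $p=k_12^{\alpha-2}-1$ with $1\le k_1\le 5$, and then divides $f(x)=x^4+x^3+x^2+x+1$ by $g(x)=k_1x/4-1$ so that $p=g(2^\alpha)\mid f(2^\alpha)$ forces $p$ to divide an explicit constant remainder ($341$, $31$, $781$, $5$, or $2101$), leaving only a handful of primes to check by hand. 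You need this resultant-type argument (or an equivalent one exploiting the linear form of $p$ in $2^{\alpha-2}$) in place of the Wieferich classification; with that substitution, and a similarly explicit finite check for $s\in\{3,4\}$ where your size bounds do apply, the proof closes.
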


Unfortunately, our method is not applicable to other values of $k$ even though computation supports the following conjecture.  
\begin{conj}
Let $k>2$ be a prime such that $2^k-1$ is a Mersenne prime. If $n = 2^{\alpha-1}p^{\beta-1}$, where $\alpha, \beta > 1$ and $p<3\cdot 2^{\alpha-1}-1$ is an odd prime. Then $n\ |\ \sigma_k(n)$ if and only if $n$ is an even perfect number $\neq 2^{k-1}(2^k-1)$.
\end{conj}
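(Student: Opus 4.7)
The plan is to mirror the structure of the proof of Theorem \ref{main1}. The easy direction goes by direct computation along the lines of Theorem \ref{main0}: for a perfect number $n=2^{q-1}(2^q-1)$ with $q\neq k$ (both Mersenne exponents), write
\[
\frac{\sigma_k(n)}{n}\ =\ \frac{(2^{kq}-1)\bigl(1+(2^q-1)^k\bigr)}{2^{q-1}(2^q-1)(2^k-1)},
\]
observe that $k$ odd yields $1+(2^q-1)^k=2^q\cdot(\text{odd})$, and use the coprimality $\gcd(2^q-1,2^k-1)=1$ (distinct Mersenne exponents) to conclude $(2^q-1)(2^k-1)\mid 2^{kq}-1$, which makes the ratio an integer. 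The excluded number $n=2^{k-1}(2^k-1)$ fails because LTE gives $v_{2^k-1}(2^{k^2}-1)=1+v_{2^k-1}(k)=1$, whereas the denominator contributes valuation $2$.

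For the converse, let $n=2^{\alpha-1}p^{\beta-1}$ with $n\mid\sigma_k(n)$. Because $\sigma_k(2^{\alpha-1})$ is odd and $\sigma_k(p^{\beta-1})\equiv 1\pmod p$, the divisibility splits as
\[
2^{\alpha-1}\mid\sigma_k(p^{\beta-1})\qquad\text{and}\qquad p^{\beta-1}\mid\sigma_k(2^{\alpha-1})=\frac{2^{k\alpha}-1}{2^k-1}.
\]
The sum $\sigma_k(p^{\beta-1})=1+p^k+\cdots+p^{k(\beta-1)}$ has $\beta$ odd terms, so $\alpha\ge 2$ forces $\beta$ to be even; LTE then yields $v_2(\sigma_k(p^{\beta-1}))=v_2(p+1)+v_2(\beta)-1$, giving $\alpha\le v_2(p+1)+v_2(\beta)$. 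Once one reduces to $\beta=2$, the conjecture follows from Theorem \ref{main0}: the inequality becomes $2^{\alpha-1}\mid p+1$, which together with $p<3\cdot 2^{\alpha-1}-1$ pinches $p\in\{2^{\alpha-1}-1,\,2^\alpha-1\}$; the spurious candidate $p=2^{\alpha-1}-1$ has $\mathrm{ord}_p(2)=\alpha-1$, so $p\mid(2^{k\alpha}-1)/(2^k-1)$ demands $\alpha-1\mid k$, i.e., $\alpha=k+1$, and the resulting $n=2^k(2^k-1)$ is then eliminated by an LTE count showing $v_{2^k-1}(\sigma_k(2^k))=v_{2^k-1}(k+1)=0$ for $k\ge 3$.

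The main obstacle, and the reason the authors can only settle $k=5$, is ruling out $\beta\ge 4$. As $\beta$ grows the $2$-adic inequality weakens, permitting multiple candidates of the form $p+1=c\cdot 2^{\alpha-v_2(\beta)}$ with small odd $c$ inside the window $p<3\cdot 2^{\alpha-1}-1$, and the condition $p^{\beta-1}\mid(2^{k\alpha}-1)/(2^k-1)$ translates, setting $d:=\mathrm{ord}_p(2)$, into
\[
\beta-1\ \le\ v_p(2^d-1)+v_p(k\alpha/d),
\]
split according to whether $p=2^k-1$. I would attack this by combining (i) the $2$-adic enumeration of candidate residues of $p$ modulo $2^{\alpha-v_2(\beta)}$, (ii) a Zsygmondy-type argument forcing $d$ to lie in a small set of divisors of $k\alpha$, and (iii) a direct $v_{2^k-1}$ audit to kill the Mersenne-exceptional branch. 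The reason this does not close uniformly in $k$ is that the odd part of $\sigma_k(2^{\alpha-1})=(2^{k\alpha}-1)/(2^k-1)$ has no compact arithmetic description, so the case tree in (ii) grows with $k$; a uniform treatment likely requires either a new identity expressing the ratio $\sigma_k(n)/n$ in terms of $\alpha,\beta,p$, or a global size estimate on $\sigma_k(n)/n$ that contradicts integrality outside $\beta=2$.
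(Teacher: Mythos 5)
This statement is the paper's \emph{Conjecture}: the paper offers no proof of it (the authors explicitly say their method does not extend beyond $k=5$), so there is no in-paper argument to compare against, and the real question is whether your proposal closes the gap. It does not. What you actually prove is the backward implication and the reduction of the case $\beta=2$ to Theorem \ref{main0}; both of these are already contained in the paper (Theorem \ref{main0} and its proof), and neither is the content of the conjecture. The entire substance of the conjecture lies in excluding $\beta\ge 4$ for a general Mersenne exponent $k$, and for that part you give only a program --- ``I would attack this by combining (i)\dots(ii)\dots(iii)'' --- and then concede yourself that the case tree in (ii) grows with $k$ and that ``a uniform treatment likely requires'' a new idea. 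That is a statement of the open problem, not a proof of it. Concretely, the step that fails is bounding $\beta$: your $2$-adic inequality $\alpha\le v_2(p+1)+v_2(\beta)$ and the valuation identity $\beta-1\le v_p(2^d-1)+v_p(k\alpha/d)$ constrain the parameters but do not, for general $k$, produce a contradiction when $\beta\ge 4$; a Zsygmondy-type restriction on $d=\mathrm{ord}_p(2)$ controls \emph{which} primes can divide $(2^{k\alpha}-1)/(2^k-1)$ but gives no a priori upper bound on $v_p$ of that quantity, which is exactly what is needed to cap $\beta$. This is the same wall the paper hits: its Lemmas \ref{not1mod4}, \ref{not-1mod4} and \ref{3mod4} yield inequalities such as $p^{2^v-1}\le(2^{k(v+1)}-1)/(2^k-1)$ that leave only finitely many $(v,\alpha,p)$ to check when $k=5$, but the residual finite check is $k$-dependent and has not been carried out uniformly.

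That said, the parts you do prove are correct and, in places, cleaner than the paper's treatment. Your single LTE computation $v_2\bigl(\sigma_k(p^{\beta-1})\bigr)=v_2(p+1)+v_2(\beta)-1$ unifies the paper's three inductive lemmas (\ref{tv}, \ref{tv2}, \ref{sl3}) split by $p\bmod 4$; and in the $\beta=2$ case your elimination of the spurious root $p=2^{\alpha-1}-1$ via $\mathrm{ord}_p(2)=\alpha-1\mid k\alpha$, hence $\alpha=k+1$ and $v_{2^k-1}\bigl((2^{k(k+1)}-1)/(2^k-1)\bigr)=0$, replaces the paper's polynomial-division argument and its appeal to Lemma \ref{f}. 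These are genuine simplifications of known results, but they do not advance the conjecture itself.
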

Our paper is structured as follows. Section \ref{prelim1} provides several preliminary results that are used repeatedly throughout the paper, Section \ref{proof1} proves Theorem \ref{main0} and Section \ref{proof2} proves Theorem \ref{main1}. Since the proof of several claims made in Section \ref{proof1} and Section \ref{proof2} are quite technical, we move them to the Appendix for the ease of reading.

\section{Preliminaries}\label{prelim1}
Let $n = 2^{\alpha-1}p^{\beta-1}$, where $\alpha, \beta>1$ are integers and $p < 3\cdot 2^{\alpha-1}-1$ is an odd prime. Let $k>2$ be a prime such that $2^k-1$ is a Mersenne prime. We will stick with these notation throughout the paper. If $n\ |\ \sigma_k(n)$, then 
\begin{align*}
    2^{\alpha-1}p^{\beta-1}\ |\ \sigma_k(2^{\alpha-1})\sigma_k(p^{\beta-1})&\ =\ (1+2^k+\cdots + 2^{(\alpha-1)k})(1+p^k+\cdots + p^{(\beta-1)k})\\
    &\ =\ \frac{2^{\alpha k}-1}{2^k-1}\cdot \frac{p^{\beta k} - 1}{p^k-1}.
\end{align*}
Because $(2, 2^{\alpha k} - 1) = 1$ and $(p, p^{\beta k} - 1) = 1$, it follows that
\begin{align}
    \label{k1}&2^{\alpha - 1} \mbox{ divides } \frac{p^{\beta k} - 1}{p^k- 1}, \mbox{ so }  2^{\alpha} \mbox{ divides } p^{\beta k} - 1,\\
    \label{k2}&p^{\beta - 1} \mbox{ divides } \frac{2^{\alpha k} - 1}{2^k - 1}.
\end{align}
Furthermore, rewrite \eqref{k1} as $$2^{\alpha-1}\ |\ \frac{p^{\beta k} - 1}{p^k-1} \ =\ \frac{(p^{k}-1)(p^{k(\beta-1)}+ p^{k(\beta-2)} + \cdots + 1)}{p^k - 1} \ =\ \sum_{i=0}^{\beta-1} p^{ki}.$$
Since each term is odd and the summation is divisible by $2$, we know that $2\ |\ \beta$. The following lemma is the key ingredient in the proof of Theorem \ref{main0}.
\begin{lem}\label{f}
Let $n = 2^{\alpha-1}(2^k-1)^{\beta-1}$, where $\alpha, \beta>1$ are integers. Then $n\ \not|\ \sigma_k(n)$.
\end{lem}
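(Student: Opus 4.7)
\medskip

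\noindent\textbf{Proof proposal.} Set $q:=2^k-1$, which is prime by hypothesis. The plan is to assume $n\mid\sigma_k(n)$ and derive a contradiction by computing the $2$-adic and $q$-adic valuations of
\[
\sigma_k(n)\;=\;\frac{2^{\alpha k}-1}{2^k-1}\cdot\frac{q^{\beta k}-1}{q^k-1}
\]
exactly, and then squeezing $\alpha$ between incompatible bounds. The first factor $\sum_{i=0}^{\alpha-1}2^{ik}$ is odd, so the full power $2^{\alpha-1}$ dividing $n$ must come from the second factor; symmetrically, $\sum_{i=0}^{\beta-1}q^{ik}\equiv 1\pmod q$, so the full power $q^{\beta-1}$ must come from the first factor. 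The parity argument of Section~\ref{prelim1} applied with $p=q$ also forces $\beta$ to be even.

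For the $q$-adic valuation I would write $2^{\alpha k}=(2^k)^{\alpha}$ and apply LTE at the odd prime $q$ (with $v_q(2^k-1)=1$) to obtain $v_q\!\left(\tfrac{2^{\alpha k}-1}{2^k-1}\right)=v_q(\alpha)$. For the $2$-adic valuation I would invoke the $p=2$ version of LTE after noting $v_2(q-1)=v_2(2^k-2)=1$ and $v_2(q+1)=v_2(2^k)=k$; since $\beta k$ is even, this yields
\[
v_2(q^{\beta k}-1)\;=\;v_2(q-1)+v_2(q+1)+v_2(\beta k)-1\;=\;k+v_2(\beta),
\]
while $v_2(q^k-1)=v_2(q-1)=1$ because $k$ is odd. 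Therefore $v_2\!\left(\tfrac{q^{\beta k}-1}{q^k-1}\right)=k-1+v_2(\beta)$.

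Substituting into the divisibility conditions $2^{\alpha-1}\mid\sigma_k(n)$ and $q^{\beta-1}\mid\sigma_k(n)$ produces the two inequalities
\[
\alpha\;\le\;k+v_2(\beta) \qquad\text{and}\qquad \alpha\;\ge\;(2^k-1)^{\beta-1}.
\]
It then suffices to verify $(2^k-1)^{\beta-1}>k+v_2(\beta)$ for every prime $k\ge 3$ and every even $\beta\ge 2$. The case $\beta=2$ reduces to $2^k>k+2$, which is immediate for $k\ge 3$. For $\beta\ge 4$ I would chain the crude estimates $(2^k-1)^{\beta-1}\ge 2^{(k-1)(\beta-1)}\ge 2^{k+\beta-1}$ (the second step using $(k-2)(\beta-2)\ge 2$) with the elementary $2^{k+\beta-1}>k+\beta-1\ge k+v_2(\beta)$, the last inequality coming from $v_2(\beta)\le\log_2\beta\le\beta-1$.

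The one delicate point is choosing the correct branch of LTE at $p=2$: because $v_2(q-1)=1$ exactly, that formula is only valid when the exponent $\beta k$ is even, which is precisely what the preliminary parity observation secures. Everything else is a routine valuation calculation followed by an exponential-versus-linear growth comparison.
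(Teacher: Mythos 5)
Your proof is correct, and it reaches the same two valuation facts that drive the paper's argument --- $v_2\bigl(\sigma_k((2^k-1)^{\beta-1})\bigr)=k-1+v_2(\beta)$, forcing $\alpha\le k+v_2(\beta)$, and the $q$-adic count on the other factor --- but by a genuinely different route. The paper proves both valuations by hand, via an appendix of inductive lemmas (Lemmas \ref{vs1}, \ref{cando}, \ref{appr} and Remark \ref{appr2}), and its $q$-adic information is deliberately lossy: it only extracts $\beta\le u+2^k-1$ where $u=v_q(\alpha)$, and then squeezes $\beta$ between $2^{(2^k-1)^u-k}$ and $u+2^k-1$ (treating $u=0$ as a separate degenerate case). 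You instead invoke the lifting-the-exponent lemma at both primes, which is legitimate here since $q=2^k-1$ is prime with $v_q(2^k-1)=1$ and, for $p=2$, the exponent $\beta k$ is even by the parity observation you correctly import from Section \ref{prelim1}; this gives the sharp conclusion $q^{\beta-1}\mid\alpha$, hence $\alpha\ge(2^k-1)^{\beta-1}$, and the contradiction $(2^k-1)^{\beta-1}>k+v_2(\beta)$ is then a clean exponential-versus-linear estimate that absorbs the paper's $u=0$ case automatically (since $\beta\ge2$ already forces $q\mid\alpha$). Your elementary chain $(2^k-1)^{\beta-1}\ge2^{(k-1)(\beta-1)}\ge2^{k+\beta-1}>k+\beta-1\ge k+v_2(\beta)$ checks out, including the pivot $(k-2)(\beta-2)\ge2$ for $k\ge3$, $\beta\ge4$ and the separate check $2^k>k+2$ for $\beta=2$. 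What your approach buys is the elimination of the entire technical appendix behind this lemma at the cost of citing LTE; what the paper's buys is self-containedness. You were also right to flag the delicate point: the $p=2$ form of LTE you use requires the exponent to be even precisely because $v_2(q-1)=1$, and the parity of $\beta$ is what rescues it.
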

\begin{proof}
We use proof by contradiction. Suppose $n\ |\ \sigma_k(n)$. By \eqref{k1} and \eqref{k2}, we have
\begin{align}
    \label{kk1}2^\alpha &\ |\ (2^k-1)^{\beta k} -1,\\
    \label{kk2} (2^k-1)^{\beta} &\ |\ (2^{\alpha k}-1)\ =\ (2^k-1)((2^k)^{\alpha-1}+\cdots+1).
\end{align}
Write $\alpha = (2^k-1)^u \alpha_1$ and $\beta = 2^v\beta_1$, where $u\ge 0$, $v\ge 1$ and $(2^k-1, \alpha_1) = (2,\beta_1) = 1$. By Lemma \ref{cando}, $\alpha \le v+k$. 

If $u=0$, we get $\alpha = \alpha_1$. From \eqref{kk2}, $\beta = 1$, which contradicts that $2\ |\ \beta$.

If $u\ge 1$, Remark \ref{appr2} implies that $\beta\le u+2^k-1$. We have
\begin{align*}
   2^{(2^k-1)^u-k}\ \le \ 2^{\alpha-k}\beta_1 \ \le \ 2^v\beta_1\ =\ \beta\ \le \ u+2^k-1
\end{align*}
Since for all $u\ge 1$ and $k\ge 3$, $$ 2^{(2^k-1)^u-k}\ > \  u+2^k-1,$$
we have a contradiction. This finishes our proof. \end{proof}
\section{Proof of Theorem \ref{main0}}\label{proof1}
For the backward implication, we prove that if $n = 2^{\alpha-1}p$ and $n\ |\ \sigma_k(n)$, then $\alpha$ is prime and $p = 2^{\alpha}-1$. By Lemma \ref{f}, $n\neq 2^{k-1}(2^k-1)$. We have
\begin{align*}
    \sigma_k(n) \ =\ \sigma_k(2^{\alpha-1}p)&\ =\ \sigma_k(2^{\alpha-1})\sigma_k(p)\\
    &\ =\ (1+2^k+\cdots + 2^{k(\alpha-1)})(1+p^k)\\
    &\ =\ (1+2^k+\cdots + 2^{k(\alpha-1)})(1+p)\sum_{i=1}^k p^{k-i}(-1)^{i+1}.
\end{align*}
So, $2^{\alpha-1}p\ |\ \sigma_k(n)$ implies that $2^{\alpha-1}\ |\ 1+p$ and $p\ |\ 1+2^k+\cdots + 2^{k(\alpha-1)}$. There exist $k_1, k_2\in\mathbb{N}$ such that $p = k_1 2^{\alpha-1}-1$ and $1+2^k+\cdots + 2^{k(\alpha-1)} = \frac{2^{k\alpha}-1}{2^k-1} = k_2p$. So,
\begin{align}\label{sc1}
    2^{k\alpha} - 1 \ =\ (2^{\alpha}-1)\sum_{i=0}^{k-1}2^{i\alpha} \ =\ k_3(k_1 2^{\alpha-1}-1),
\end{align}
where $k_3 = (2^k-1)k_2$.

Suppose that $k_1 = 1$. Then $p = 2^{\alpha-1}-1$ and \eqref{sc1} implies that either $2^{\alpha-1}-1\ |\ (2^\alpha-1)$ or $2^{\alpha-1}-1 \ |\ \sum_{i=0}^{k-1}2^{i\alpha}$. If the former, we write
$$1 \ =\ 2^\alpha - 1 - 2(2^{\alpha-1}-1)\ \equiv\ 0 \Mod 2^{\alpha-1}-1,$$
which is impossible. If the latter, we let $x_0 = 2^{\alpha}$ to have
\begin{align}\label{sc2}\frac{x_0}{2}-1\ |\ \sum_{i=0}^{k-1}x_0^i.\end{align}
Consider two polynomials $f(x) = \sum_{i=0}^{k-1}x^i$ and $g(x) = \frac{x}{2}-1$. By the division algorithm, we write
$f(x) = g(x)p(x) + q(x)$ for some polynomials $p(x)$ and $q(x)$ with $\deg q(x) < \deg g(x) = 1$. Observe that $p(x)$ has integer coefficients. Since $\deg q(x)<\deg g(x)$, $q(x)$ is a constant polynomial. In particular, 
$$q(x) \ =\ q(2)\ =\ f(2) - g(2)p(2)\ =\ f(2) - 0\cdot p(2)\ =\ \sum_{i=0}^{k-1}2^i\ =\ 2^k-1.$$
So, $f(x_0) = g(x_0)p(x_0) + 2^k-1$ and so, $f(x_0)\equiv 2^k-1\Mod g(x_0)$. By \eqref{sc2}, we know that 
$$2^k-1\equiv 0\Mod 2^{\alpha-1}-1,$$
which implies that $p = 2^{\alpha-1}-1 = 2^k-1$. By Lemma \ref{f}, $n\ \not| \ \sigma_k(n)$, a contradiction. So, $k_1\ge 2$; however, $k_1<3$ by assumption. So, $k_1 = 2$; we have $p = 2^{\alpha}-1$ and $\alpha$ is a prime. Therefore, $n$ is an even perfect number $\neq 2^{k-1}(2^k-1)$.

For the forward implication, write $n = 2^{q-1}(2^q-1)$, where $q\neq k$ and $2^q-1$ are primes. We have
\begin{align*}\sigma_k(n) &\ =\ (1+2^k+2^{2k}+\cdots + 2^{(q-1)k})(1+(2^q-1)^k)\\
&\ =\ \frac{2^{qk}-1}{2^k-1}(1+(2^q-1)^k).
\end{align*}
Clearly, $2^{q-1}$ divides $1+(2^q-1)^k$. It suffices to show that $2^q-1$ divides $\frac{2^{qk}-1}{2^k-1}$. The fact $n\neq 2^{k-1}(2^k-1)$ implies that $2^q-1$ and $2^k-1$ are two distinct primes. So, $(2^q-1, 2^k-1) = 1$. Because $2^q-1\ |\ 2^{qk}-1$, $2^q-1$ divides $\frac{2^{qk}-1}{2^k-1}$. Therefore, $n\ |\ \sigma_k(n)$.
\section{Proof of Theorem \ref{main1}}\label{proof2}
\subsection{Preliminary results}
We provide lemmas that give useful bounds used in the proof of Theorem \ref{main1}.
\begin{lem}\label{v10}
Let $n = 2^{\alpha-1}p^3$, where $\alpha>1$, $p\equiv 3\Mod 4$ and $p<3\cdot 2^{\alpha-1}-1$. Then $n\ \not|\ \sigma_5(n)$. 
\end{lem}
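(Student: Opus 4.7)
I argue by contradiction: assume $n\mid\sigma_5(n)$. Because $\sigma_5$ is multiplicative and $\gcd(2^{\alpha-1},\sigma_5(2^{\alpha-1}))=\gcd(p^3,\sigma_5(p^3))=1$, this is equivalent to
\[
2^{\alpha-1}\mid\sigma_5(p^3)=(1+p^5)(1+p^{10})\quad\text{and}\quad p^3\mid\sigma_5(2^{\alpha-1})=\frac{2^{5\alpha}-1}{31}.
\]

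First I would compute the $2$-adic content of $\sigma_5(p^3)$. The $2$-adic lifting-the-exponent (LTE) lemma, applicable since $p$ and $5$ are odd, gives $v_2(1+p^5)=v_2(1+p)$; writing $1+p^{10}=1+(p^2)^5$ and noting $p^2\equiv 1\Mod 8$ then gives $v_2(1+p^{10})=v_2(1+p^2)=1$. So the first divisibility is equivalent to $v_2(p+1)\ge\alpha-2$. Combined with the hypothesis $p<3\cdot 2^{\alpha-1}-1$, this forces $p+1=t\cdot 2^{\alpha-2}$ for some $t\in\{1,2,3,4,5\}$, giving five cases.

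Next I would exploit the second divisibility in each case. Raising $t\cdot 2^{\alpha-2}\equiv 1\Mod p$ to the fifth power, and substituting $2^{5\alpha}\equiv 1\Mod p$ (implied by the second divisibility whenever $p\neq 31$), yields $t^5\equiv 2^{10}\Mod p$, i.e., $p\mid t^5-1024$. For $t\in\{1,2,3,5\}$ this confines $p$ to the prime divisors of $1023$, $992$, $781$, and $2101$, namely $p\in\{3,11,31,71,191\}$. For $t=4$ the obstruction $p\mid 0$ is vacuous, but $p=2^\alpha-1$ is then Mersenne, and LTE gives $v_p(2^{5\alpha}-1)=v_p(2^\alpha-1)+v_p(5)\le 2<3$, so this branch is excluded. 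In each remaining branch, I check for every candidate $p$ whether $p+1$ actually has the form $t\cdot 2^{\alpha-2}$ with the required $t$; most pairings fail immediately because of the odd part of $p+1$, and each surviving pair $(p,\alpha)$ is killed by a direct LTE computation showing $v_p((2^{5\alpha}-1)/31)<3$. The anomalous prime $p=31$ is treated separately using $v_{31}(2^{5\alpha}-1)=1+v_{31}(\alpha)$ and the small values of $\alpha$ arising in that subcase.

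The hardest part is the bookkeeping in the last step: one must cross-match each candidate prime from the divisor list against the exact form $p+1=t\cdot 2^{\alpha-2}$, and treat $p=31$ as a genuine exception, since the explicit $31$ in the denominator of $\sigma_5(2^{\alpha-1})$ bumps the required $p$-adic valuation up by one.
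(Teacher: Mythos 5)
Your proposal is correct, and while its skeleton matches the paper's (force $p+1=t\cdot 2^{\alpha-2}$ with $1\le t\le 5$ from the $2$-adic side, then use $p^3\mid (2^{5\alpha}-1)/31$ to confine $p$ to an explicit finite list, then eliminate), the middle step is genuinely different and cleaner. The paper first argues that $p$ cannot divide both $2^\alpha-1$ and $\sum_{i=0}^4 2^{i\alpha}$ (else $p=5$), splits into the two resulting cases, and in the second case performs a polynomial long division of $x^4+x^3+x^2+x+1$ by $tx/4-1$ separately for each $t$, extracting the remainders $341$, $31$, $781/81$, $5$, $2101/625$. Your single congruence $t^5\equiv 2^{10}\pmod p$, obtained by raising $t\cdot 2^{\alpha-2}\equiv 1\pmod p$ to the fifth power and using only $p\mid 2^{5\alpha}-1$, absorbs both of the paper's cases at once (e.g.\ your $t^5-1024=-1023=-3\cdot 341$ for $t=1$ contains both the paper's Case~1 constant $3$ and its Case~2 constant $341$) and needs no case split or division algorithm; the parenthetical caveat about $p\neq 31$ is unnecessary here, since $p\mid(2^{5\alpha}-1)/31$ already implies $p\mid 2^{5\alpha}-1$ for every $p$. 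The final elimination also differs in flavor: the paper disposes of the surviving pairs $(\alpha,p)$ by direct computation of $\sigma_5(n)$, whereas you use lifting-the-exponent to show $v_p(2^{5\alpha}-1)\le 2<3$ (with the extra unit of valuation demanded when $p=31$), which is more illuminating and verifiable by hand. Both routes are finite checks over the same small set $\{3,11,31,71,191\}$ cross-matched against $p+1=t\cdot 2^{\alpha-2}$, and both are complete proofs.
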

\begin{proof} We prove by contradiction. Suppose that $n\ |\ \sigma_5(n)$. 
We have \begin{align*}\sigma_5(2^{\alpha-1}p^{3}) &\ =\ (1+2^5+\cdots 2^{5(\alpha-1)})(1+p^5+p^{10}+p^{15})\\
&\ =\ (1+2^5+\cdots 2^{5(\alpha-1)})(p^{10}+1)(p+1)(p^4-p^3+p^2-p+1).
\end{align*}
So, \begin{align}2^{\alpha-1} &\ |\ (p^{10}+1)(p+1)\\ \label{v11}p^3&\ |\ 1+2^5+\cdots 2^{5(\alpha-1)}\ =\ \frac{2^{5\alpha}-1}{2^5-1}.\end{align} Because $p^{10}+1\equiv 2\Mod 4$, we know that $2^{\alpha-2}\ |\ p+1$. Hence, $p = k_12^{\alpha-2}-1$ for some $k_1\in \mathbb{N}$. Combining with $p<3\cdot 2^{\alpha-1}-1$, we get $1\le k_1\le 5$. By \eqref{v11}, write $2^{5\alpha}-1 = 31k_2p^3$ for some $k_2\in\mathbb{N}$. Therefore,
\begin{align}\label{v12}
    31k_2(k_12^{\alpha-2}-1)^3 \ =\ (2^{\alpha}-1)(2^{4\alpha}+2^{3\alpha}+2^{2\alpha}+2^\alpha+1).
\end{align}

Suppose that $p$ divides both $2^{\alpha}-1$ and $\sum_{i=0}^4 2^{i\alpha}$. Then $2^{\alpha}\equiv 1\Mod p$ and so, $\sum_{i=0}^4 2^{i\alpha}\equiv 5\Mod p$. Hence, $p = 5$, which contradicts $p\equiv 3\Mod 4$. It must be that either $p^3\ |\ \sum_{i=0}^4 2^{i\alpha}$ or $p^3 \ |\ 2^{\alpha}-1$. We consider two corresponding cases. 

Case 1: $(k_12^{\alpha-2}-1)^3\ |\ 2^{\alpha}-1$. So, $(k_12^{\alpha-2}-1)^3 \le  2^{\alpha}-1$. In order that the inequality is true for some $\alpha\ge 2$, $1\le k_1\le 2$. 
\begin{itemize}
    \item [(i)] $k_1 = 1$. Then $2^{\alpha-2}-1\ |\ 2^\alpha-1$. Because 
    $$3\ =\ (2^\alpha-1)-4(2^{\alpha-2}-1)\ \equiv\ 0\Mod 2^{\alpha-2}-1,$$
    $p = 2^{\alpha-2}-1 = 3$. So, $\alpha = 4$ and $n = 2^33^3$, a contradiction as $2^33^3\ \not |\ \sigma_5(2^33^3)$.
    \item [(ii)] $k_1 = 2$. Then $2^{\alpha-1}-1\ |\ 2^\alpha-1$. Because 
    $$1\ =\ (2^\alpha-1)-2(2^{\alpha-1}-1)\ \equiv\ 0\Mod 2^{\alpha-1}-1,$$
    $p = 2^{\alpha-1}-1 = 1$, a contradiction. 
\end{itemize}

Case 2: $(k_12^{\alpha-2}-1)^3\ |\ \sum_{i=0}^4 2^{i\alpha}$. Let $x_0 = 2^\alpha$. Let $f(x) = x^4+x^3+x^2+x+1$ and $g(x) = k_1x/4-1$. Clearly, $p = g(x_0)\ |\ f(x_0)$. By the division algorithm, we can write $f(x) = p(x)g(x)+q(x)$, where $q(x)$ is a constant polynomial.
\begin{itemize}
\item[(i)] If $k_1 = 1$, we have $p(x) = 4x^3+20x^2+84x+340$ and $q(x) = 341$. So, $f(x_0) = p(x_0)g(x_0)+341$. Take modulo $g(x_0)$ to have $341\equiv 0\Mod g(x_0)$. Hence, $p\ |\ 341$ and so $p = 11$ or $31$. Since $p = 2^{\alpha-2}-1$, $p = 31$ and $\alpha = 7$. However, $n = 2^631^3\ \not|\ \sigma_5(n)$. 

\item[(ii)] If $k_1 = 2$, we have $p(x) = 2x^3+6x^2+14x+10$ and $q(x) = 31$. So, $f(x_0) = p(x_0)g(x_0) + 31$. Take modulo $g(x_0)$ to have $31\equiv 0\Mod g(x_0)$. Hence, $p\ |\ 31$ and so $p = 31$, $\alpha = 6$. However, $n = 2^531^3\ \not|\ \sigma_5(n)$.

\item[(iii)] If $k_1 = 3$, we have $p(x) = \frac{4}{3}x^3+\frac{28}{9}x^2+\frac{148}{27}x+\frac{700}{81}$ and $q(x) = \frac{781}{81}$. So, $81f(x_0) = (108x_0^3+252x_0^2+444x_0+700)g(x_0)+781$. Take modulo $g(x_0)$ to have $781\equiv 0\Mod g(x_0)$. Then $p\ |\ 781$ and so, $p= 3\cdot 2^{\alpha-2}-1 = 11$, $\alpha = 4$ and $n = 2^311^3$. However, $n = 2^311^3\ \not|\ \sigma_5(n)$.

\item[(iv)] If $k_1 = 4$, we have $p(x) = x^3+2x^2+3x+4$ and $q(x) = 5$. So, $f(x_0) = p(x_0)g(x_0) + 5$. Take modulo $g(x_0)$ to have $5\equiv 0\Mod g(x_0)$. So, $p = 5$, which contradicts that $p\equiv 3\Mod 4$. 

\item[(v)] If $k_1 = 5$, we have $p(x) = \frac{4}{5}x^3+\frac{36}{25}x^2+\frac{244}{125}x+\frac{1476}{625}$ and $q(x) = \frac{2101}{625}$. So, $625f(x_0) = (500x_0^3+900x_0^2+1220x_0+1476)g(x_0) + 2101$. Take modulo $g(x_0)$ to have 
$2101\equiv 0\Mod g(x_0)$. So, $p = 5\cdot 2^{\alpha-2}-1 = 11$ or $191$. Both cases are impossible. 
\end{itemize}
This completes our proof. 
\end{proof}
\begin{lem}\label{not1mod4}
Let $n= 2^{\alpha-1}p^{\beta-1}$, $p\equiv 1\Mod 4$ and $n\ |\ \sigma_k(n)$. Write $\beta = 2^v\beta_1$, where $v\ge 1$ and $(2,\beta_1)=1$. Then
\begin{align}\label{u1}p^{2^v-1}\ \le\ \frac{2^{k(v+1)}-1}{2^k-1}.\end{align}
\end{lem}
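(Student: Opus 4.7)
The plan is to use both divisibility relations extracted in Section \ref{prelim1}, namely \eqref{k1} and \eqref{k2}, and to pin down the exact power of $2$ dividing $\frac{p^{\beta k} - 1}{p^k - 1}$ via the lifting-the-exponent (LTE) lemma. The resulting upper bound on $\alpha$ will then be plugged into \eqref{k2} to produce the desired inequality on $p^{\beta-1}$, and the crude lower bound $\beta - 1 \ge 2^v - 1$ will finish the job.

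Since $p \equiv 1 \Mod 4$, the $p=2$ version of LTE applies in its clean form: for every positive integer $n$,
$$v_2(p^n - 1) \ =\ v_2(p - 1) + v_2(n),$$
where $v_2$ denotes the $2$-adic valuation. Because $k$ is an odd prime by our standing hypothesis, $v_2(k) = 0$, so $v_2(p^k - 1) = v_2(p - 1)$ while $v_2(p^{\beta k} - 1) = v_2(p - 1) + v_2(\beta) = v_2(p - 1) + v$. Subtracting,
$$v_2\!\left(\frac{p^{\beta k} - 1}{p^k - 1}\right) \ =\ v,$$
and comparing with \eqref{k1} forces $\alpha - 1 \le v$, i.e., $\alpha \le v + 1$.

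With this bound in hand, \eqref{k2} gives $p^{\beta - 1} \le \frac{2^{\alpha k} - 1}{2^k - 1} \le \frac{2^{k(v+1)} - 1}{2^k - 1}$. Since $\beta = 2^v \beta_1 \ge 2^v$, we have $\beta - 1 \ge 2^v - 1$, so $p^{2^v - 1} \le p^{\beta - 1}$, and chaining the two inequalities yields \eqref{u1}.

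The only point requiring care is that the hypothesis $p \equiv 1 \Mod 4$ is \emph{exactly} what LTE needs in order to produce the clean identity $v_2(p^n - 1) = v_2(p-1) + v_2(n)$; if instead $p \equiv 3 \Mod 4$, the formula picks up an additional $v_2(p+1)$ term whenever $n$ is even, and the computation of $v_2\!\left(\frac{p^{\beta k} - 1}{p^k - 1}\right)$ would be more delicate (which is why the $p \equiv 3 \Mod 4$ case is handled by a separate lemma such as Lemma \ref{v10}). Apart from this subtlety, the proof reduces to a short chain of inequalities and poses no real obstacle.
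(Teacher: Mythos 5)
Your proof is correct and follows essentially the same route as the paper: both arguments show that $2^v$ exactly divides $\frac{p^{\beta k}-1}{p^k-1}$, deduce $\alpha \le v+1$ from \eqref{k1}, and then feed this into \eqref{k2} together with $\beta - 1 \ge 2^v - 1$. The only difference is that you invoke the standard lifting-the-exponent lemma for the valuation $v_2(p^n-1) = v_2(p-1)+v_2(n)$, whereas the paper proves this fact from scratch by induction (its Lemma \ref{tv}).
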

\begin{proof}
Let $p-1 = 2^tp_1$, where $t\ge 2$ and $2\ \not|\ p_1$. Because \begin{align}\label{sl}p^k-1 = (p-1)\sum_{i=1}^{k}p^{k-i} = 2^tp_1\sum_{i=1}^{k}p^{k-i},\end{align} we have $2^t\ ||\ (p^k-1)$. By Lemma \ref{tv}, $2^{t+v}\ ||\ p^{k\beta}-1$. Hence,
$$2^v \ ||\ \frac{p^{k\beta}-1}{p^k-1}.$$
By \eqref{k1}, \begin{align}\label{u2}\alpha\le v+1.\end{align} and so
\begin{align*}
    p^{2^v-1} \ \le\ p^{\beta-1}\ \le\ \frac{2^{k\alpha}-1}{2^{k}-1}\ \le\ \frac{2^{k(v+1)}-1}{2^k-1}.
\end{align*}
\end{proof}
\begin{lem}\label{not-1mod4}
Let $n= 2^{\alpha-1}p^{\beta-1}$, $p\equiv 3\Mod 4$ and $n\ |\ \sigma_k(n)$. Write $\beta = 2^v\beta_1$, where $v\ge 1$ and $(2,\beta_1)=1$. Then
\begin{align}\label{v3}p^{2^v-2k-1}\ <\ \frac{2^{k(v-1)}}{2^k-1}.\end{align}
\end{lem}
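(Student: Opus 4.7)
The plan is to mirror the proof of Lemma \ref{not1mod4}, but to recover an extra factor of $2^k$ by exploiting the fact that for $p\equiv 3\Mod 4$ the dominant $2$-adic contribution to $p^{k\beta}-1$ comes from $p+1$ rather than from $p-1$. First I would compute the $2$-adic valuations of $p^k-1$ and of $p^{k\beta}-1$. Since $k$ is an odd prime and $p\equiv 3\Mod 4$, the factorisation $p^k-1=(p-1)\sum_{i=1}^{k}p^{k-i}$ shows the second factor is a sum of $k$ odd terms, hence odd, so $v_2(p^k-1)=v_2(p-1)=1$. Writing $p+1=2^t p_1$ with $t\ge 2$ and $p_1$ odd, the standard $2$-adic lifting-the-exponent formula (the $p\equiv 3\Mod 4$ analogue of Lemma \ref{tv}, applied to the even exponent $k\beta$) yields
\begin{align*}
v_2(p^{k\beta}-1)\ =\ v_2(p-1)+v_2(p+1)+v_2(k\beta)-1\ =\ t+v.
\end{align*}
Therefore $v_2\left(\frac{p^{k\beta}-1}{p^k-1}\right)=t+v-1$, and \eqref{k1} forces $\alpha\le t+v$.

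Substituting into \eqref{k2} then gives
\begin{align*}
p^{\beta-1}\ \le\ \frac{2^{k\alpha}-1}{2^k-1}\ <\ \frac{2^{k(t+v)}}{2^k-1}.
\end{align*}
The trivial bound $2^t\le p+1$ gives $2^{kt}\le (p+1)^k$, and the elementary inequality $p+1\le p^2/2$, which holds for every prime $p\ge 3$, upgrades this to $(p+1)^k\le p^{2k}/2^k$. Combining,
\begin{align*}
p^{\beta-1}\ <\ \frac{p^{2k}\cdot 2^{k(v-1)}}{2^k-1}.
\end{align*}
Since $\beta\ge 2^v$, so that $\beta-1\ge 2^v-1$, dividing both sides by $p^{2k}$ yields the desired bound.

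The delicate point is the two-step estimate $2^t\le p+1\le p^2/2$. The cruder bound $p+1\le 2p$, which mirrors what is effectively used when $p\equiv 1\Mod 4$, loses a factor of $2^k$ and only produces $p^{2^v-k-1}<2^{kv}/(2^k-1)$, exactly $2^k$ short of the target. The sharper inequality $p+1\le p^2/2$ is available precisely because $p\equiv 3\Mod 4$ forces $p\ge 3$, and it is this gain that supplies the missing $2^k$.
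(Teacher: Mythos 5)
Your proposal is correct and follows essentially the same route as the paper: the paper's Lemma \ref{tv2} is exactly the lifting-the-exponent computation you invoke (the paper writes $s=v_2(p^2-1)=t+1$ and gets $\alpha\le v+s-1=v+t$, identical to your bound), and its final estimate $2^{ks}<p^{2k}$ is the same gain you extract from $2^t\le p+1\le p^2/2$. The only differences are notational.
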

\begin{proof}
Let $p^2-1 = 2^sp_2$, where $2\ \not|\ p_2$. Then $s\ge 3$. By \eqref{sl}, $2\ ||\ p^k-1$ and by Lemma \ref{tv2}, $2^{v+s-1}\ ||\ p^{k\beta}-1$. Hence, 
$$2^{v+s-2}\ ||\ \frac{p^{k\beta}-1}{p^k-1}.$$
By \eqref{k1}, \begin{align}\label{v4}\alpha\ \le\ v+s-1.\end{align} We have
\begin{align*}
    p^{2^v-1}\ \le\ p^{\beta-1}&\ \le\ \frac{2^{k\alpha}-1}{2^k-1}\ \le\ \frac{2^{k(v+s-1)}-1}{2^k-1}\\
    &\ =\ \frac{2^{ks}2^{k(v-1)}-1}{2^k-1} \ <\ \frac{p^{2k}2^{k(v-1)}-1}{2^k-1} \mbox{ because } p^2> 2^s.
\end{align*}
Therefore,
$$p^{2^v-2k-1}\ <\ \frac{2^{k(v-1)}-1/p^{2k}}{2^k-1}\ <\ \frac{2^{k(v-1)}}{2^k-1}.$$
\end{proof}
\begin{lem}\label{3mod4}
Let $n= 2^{\alpha-1}p^{\beta-1}$, $p\equiv 3\Mod 4$ and $n\ | \ \sigma_k(n)$. Write $\beta = 2^v\beta_1$ and $p+1 = 2^\lambda p_1$, where $(2,\beta_1) = (2, p_1)=1$. Then one of the following must hold
\begin{itemize}
    \item [(1)] $$p\ =\ k,$$
    \item [(2)] $$(2^\lambda-1)^{\beta-1}\ \le\ 2^{\lambda + v}-1,$$
    \item [(3)] $$(2^\lambda - 1)^{\beta-1}\ \le \sum_{i=0}^{k-1} 2^{i(\lambda+v)}.$$
\end{itemize}
\end{lem}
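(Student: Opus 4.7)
The plan is to assume that (1) fails, i.e.\ $p \neq k$, and derive (2) or (3). Two ingredients drive the argument: the divisibility $p^{\beta-1} \mid \frac{2^{\alpha k}-1}{2^k-1}$ from \eqref{k2}, and the bound $\alpha \le v + \lambda$, which is exactly \eqref{v4} of Lemma \ref{not-1mod4} once one notes that $p \equiv 3 \pmod{4}$ forces $v_2(p^2-1) = v_2(p-1) + v_2(p+1) = 1 + \lambda$, i.e.\ $s - 1 = \lambda$ in that lemma's notation. The appearance of $(2^\lambda - 1)^{\beta-1}$ on the left-hand side of (2) and (3) will come from the cheap inequality $p = 2^\lambda p_1 - 1 \ge 2^\lambda - 1$.

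The first step is to factor $\frac{2^{\alpha k}-1}{2^k-1}$ as a product $A \cdot B$ of two positive integers whose $\gcd$ divides the prime $k$. If $k \nmid \alpha$, then $\gcd(2^\alpha - 1, 2^k - 1) = 1$, so $2^k - 1 \mid \sum_{i=0}^{k-1} 2^{i\alpha}$, and I take
\begin{equation*}
A = 2^\alpha - 1, \qquad B = \frac{\sum_{i=0}^{k-1} 2^{i\alpha}}{2^k-1}.
\end{equation*}
If $k \mid \alpha$, then $2^k - 1 \mid 2^\alpha - 1$ and I take
\begin{equation*}
A = \frac{2^\alpha - 1}{2^k - 1}, \qquad B = \sum_{i=0}^{k-1} 2^{i\alpha}.
\end{equation*}
In either case $A \cdot B = \frac{2^{\alpha k}-1}{2^k-1}$, and from the congruence $\sum_{i=0}^{k-1} 2^{i\alpha} \equiv k \pmod{2^\alpha - 1}$ one gets $\gcd(A,B) \mid \gcd(k, 2^\alpha - 1) \in \{1, k\}$.

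Once this factoring is in hand, the hypothesis $p \neq k$ gives $v_p(\gcd(A,B)) = 0$, so $\min\{v_p(A), v_p(B)\} = 0$, and the full power $p^{\beta-1}$ must divide $A$ or $B$ entirely. If $p^{\beta-1} \mid A$, then using $\alpha \le v + \lambda$ I get
\begin{equation*}
(2^\lambda - 1)^{\beta-1} \le p^{\beta-1} \le 2^\alpha - 1 \le 2^{\lambda+v} - 1,
\end{equation*}
which is (2); if $p^{\beta-1} \mid B$, then
\begin{equation*}
(2^\lambda - 1)^{\beta-1} \le p^{\beta-1} \le \sum_{i=0}^{k-1} 2^{i\alpha} \le \sum_{i=0}^{k-1} 2^{i(\lambda+v)},
\end{equation*}
which is (3).

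The main technical step, and the one I expect to be the chief obstacle, is the uniform $\gcd$ claim $\gcd(A,B) \mid k$ handled cleanly across both cases $k \mid \alpha$ and $k \nmid \alpha$; after that, the argument reduces to the routine size comparisons above.
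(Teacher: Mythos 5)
Your proposal is correct and follows essentially the same route as the paper: both split $2^{k\alpha}-1$ into the factors $2^\alpha-1$ and $\sum_{i=0}^{k-1}2^{i\alpha}$, use $\sum_{i=0}^{k-1}2^{i\alpha}\equiv k\pmod{2^\alpha-1}$ to force $p=k$ unless $p^{\beta-1}$ divides one factor entirely, and then apply $p\ge 2^\lambda-1$ together with $\alpha\le\lambda+v$ (which the paper gets from its Lemma \ref{sl3} rather than by recycling \eqref{v4}, but it is the same $2$-adic valuation fact). Your extra bookkeeping about where the $2^k-1$ denominator lands is harmless but unnecessary, since the paper simply works with $p^{\beta-1}\mid 2^{k\alpha}-1$ directly.
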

\begin{proof}
From \eqref{k1} and \eqref{k2}, we have 
$$2^{\alpha} \ |\ p^{\beta}-1\mbox{ and } p^{\beta-1}\ |\ 2^{k\alpha}-1\ =\ (2^\alpha - 1)\sum_{i=0}^{k-1} 2^{i\alpha}.$$
By Lemma \ref{sl3}, 
$2^{\lambda+v}\ ||\ p^\beta-1$. So, $\alpha\le \lambda + v$. 

Case 1: $p\ |\ 2^{\alpha}-1$ and $p\ |\ \sum_{i=0}^{k-1} 2^{i\alpha}$. The fact that $2^{\alpha}\equiv 1\Mod p$ implies that $\sum_{i=0}^{k-1} 2^{i\alpha}\equiv k\Mod p$. Because $p\ |\ \sum_{i=0}^{k-1} 2^{i\alpha}$ and $k$ is prime, it must be that $p\ =\ k$. This is scenario (1).

Case 2: $p\ |\ 2^{\alpha} - 1$ and $p \ \not| \ \sum_{i=0}^{k-1} 2^{i\alpha}$. So, 
$$2^{\alpha} \ |\ p^{\beta}-1\mbox{ and } p^{\beta-1}\ |\ 2^\alpha - 1.$$ 
We have
$$(2^\lambda - 1)^{\beta-1}\ \le\ p^{\beta-1} \ \le\ 2^{\alpha}-1\ \le\ 2^{\lambda+v}-1.$$
This is scenario (2). 

Case 3: $p\ \not |\ 2^\alpha - 1$ and $p \ |\ \sum_{i=0}^{k-1} 2^{i\alpha}$. So, 
$$2^{\alpha}\ |\ p^{\beta}-1\mbox{ and }p^{\beta-1}\ |\ \sum_{i=0}^{k-1} 2^{i\alpha}.$$
We have
$$(2^\lambda - 1)^{\beta-1}\ \le\ p^{\beta-1}\ \le\ \sum_{i=0}^{k-1} 2^{i\alpha}\ \le\ \sum_{i=0}^{k-1} 2^{i(\lambda+v)}.$$
This is scenario (3). We have finished our proof. 
\end{proof}

\subsection{Proof of Theorem \ref{main1}} 
We now bring together all preliminary results and prove Theorem \ref{main1} by case analysis. 
\begin{proof}
The backward implication follows from Theorem \ref{main0}. We prove the forward implication. Let $n = 2^{\alpha-1}p^{\beta-1}$, where $\alpha, \beta>1$ and $p<3\cdot 2^{\alpha-1} -1$ is an odd prime. Suppose that $n\ |\ \sigma_5(n)$. Computation shows that $n\neq 496$. 

Case 1: $p\equiv 1\Mod 4$. By \eqref{u1}, 
\begin{align}
    \label{v1} 5^{2^v-1}\ \le \ p^{2^v-1}\ \le\ \frac{2^{5(v+1)}-1}{2^5-1},
\end{align}
which only holds if $1\le v\le 2$. 
\begin{itemize}
    \item [(i)] $v = 1$. By \eqref{u2}, $\alpha = 2$ then by \eqref{k2}, $p\ |\ 33$, which contradicts $p\equiv 1\Mod 4$. 
    \item [(ii)] $v = 2$. By \eqref{v1}, $p\le 10$ and so $p = 5$. By \eqref{u2}, $2\le \alpha \le 3$. However, neither value of $\alpha$ satisfies \eqref{k2}.
\end{itemize}

Case 2: $p\equiv 3\Mod 4$. Note that because $k = 5$, we can ignore scenario (1) of Lemma \ref{3mod4}. By \eqref{v3}, 
\begin{align}\label{v5}
    3^{2^v-11}\ \le\ p^{2^v-11}\ <\ \frac{2^{5(v-1)}}{2^5-1}, 
\end{align}
which implies $1\le v\le 4$.\footnote{We use {\tt Desmos|Graphing Calculator} to figure out this range, which can also be proved by the intermediate value theorem.}
\begin{itemize}
    \item [(i)] $v = 4$. By \eqref{v5}, $p = 3$. So, in \eqref{v4}, $s = 3$ and $2\le \alpha\le 6$. If $\alpha\le 5$, \eqref{k2} gives
    \begin{align*}
        3^{15}\ |\ 3^{16\beta_1 - 1}\ \le\ \frac{2^{25}-1}{31}, \mbox{ a contradiction.}
    \end{align*}
    If $\alpha = 6$, \eqref{k2} does not hold. 
    \item [(ii)] $v = 3$. Then $\beta\ge 8$. By Lemma \ref{3mod4}, either $(2^\lambda - 1)^{\beta-1}\le 2^{\lambda + 3} - 1$ or 
    $(2^\lambda - 1)^{\beta-1} \le \sum_{i=0}^4 2^{i(\lambda + 3)}$.
    \begin{itemize}
        \item[(a)] If $(2^\lambda - 1)^{\beta-1}\le 2^{\lambda + 3} - 1$, then $\lambda < 2$ because $\beta\ge 8$, a contradiction. 
       \item [(b)] If $(2^\lambda - 1)^{\beta-1} \le \sum_{i=0}^4 2^{i(\lambda + 3)}$, then $\beta\le 15$ in order that $\lambda \ge 2$. Since $8\ |\ \beta$, we know $\beta = 8$. Plugging $\beta = 8$ into $(2^\lambda - 1)^{\beta-1} \le \sum_{i=0}^4 2^{i(\lambda + 3)}$, we have $2\le \lambda \le 4$ and so $2\le s\le 5$. By \eqref{v4}, $2\le \alpha \le 7$ and by \eqref{k2}, we acquire
       $$p^7\ |\ \frac{2^{5\alpha}-1}{31}\ \le\ \frac{2^{35}-1}{31}.$$
       Hence, $p \in \{3, 7, 11, 19\}$. Computation shows that for each pair $(\alpha, p)$, $\eqref{k2}$ does not hold. 
    \end{itemize}
    \item [(iii)] $v = 2$. Then $4\ | \beta$. By Lemma \ref{3mod4}, either $(2^\lambda - 1)^{\beta-1}\le 2^{\lambda + 2} - 1$ or 
    $(2^\lambda - 1)^{\beta-1} \le \sum_{i=0}^4 2^{i(\lambda + 2)}$. Since $\beta\ge 4$ and $\lambda \ge 2$, the former does not hold. If the later, since $\lambda \ge 2$, it must be that $\beta<12$ and so $\beta\in\{4, 8\}$. 
    \begin{itemize}
        \item[(a)] $\beta = 4$. Lemma \ref{v10} rejects this case. 
        \item[(b)] $\beta = 8$. Plugging $\beta = 8$ into $(2^\lambda - 1)^{\beta-1} \le \sum_{i=0}^4 2^{i(\lambda + 2)}$, we have $2\le \lambda\le 3$ and so $2\le s\le 4$. By \eqref{v4}, $2\le \alpha\le 5$. This is back to item (ii) part (b). 
    \end{itemize}
    \item [(iv)] $v=1$. By Lemma \ref{3mod4}, either $(2^\lambda - 1)^{\beta-1}\le 2^{\lambda+1}-1$ or $(2^\lambda - 1)^{\beta-1}\le \sum_{i=0}^4 2^{i(\lambda + 1)}$. If the former, $\beta = 2$ and $n = 2^{\alpha-1}p$. By Theorem \ref{main0}, $n$ is an even perfect number. If the latter, since $\lambda \ge 2$, it must be that $\beta\le 9$ and so $\beta \in \{2, 6\}$. 
    \begin{itemize}
    \item[(a)] If $\beta = 2$, Theorem \ref{main0} guarantees that $n$ is an even perfect number. 
    \item[(b)] If $\beta = 6$, then $2\le \lambda \le 4$ and so $2\le s\le 5$. By \eqref{v4}, $2\le \alpha\le 5$ and by \eqref{k2}, we acquire
       $$p^5\ |\ \frac{2^{5\alpha}-1}{31}\ \le\ \frac{2^{25}-1}{31}.$$
    Hence, $p\in\{3, 7, 11\}$. Computation shows that for each pair $(\alpha, p)$, $\eqref{k2}$ does not hold.
    \end{itemize}
\end{itemize}
We have finished the proof. 
\end{proof}
\appendix
\section{Technical proofs used for Lemma \ref{f}}
We provide proofs of claim(s) made in the proof of Lemma \ref{f}. Notation from Lemma \ref{f} is retained here.  
\begin{lem}\label{vs1}
For all odd $k\ge 3$, we have $2^{k+1}\ ||\ (2^k-1)^{2k}-1$.
\end{lem}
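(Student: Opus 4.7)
The plan is to compute $v_2\bigl((2^k-1)^{2k}-1\bigr)$ directly and show it equals $k+1$. Setting $m=2^k-1$ (which is odd), the cleanest route is the $p=2$ form of the Lifting the Exponent Lemma: for an odd integer $a$ and an even positive integer $n$,
\[
v_2(a^n-1)\ =\ v_2(a-1)+v_2(a+1)+v_2(n)-1.
\]
Applying this with $a=m=2^k-1$ and $n=2k$, I would just compute each summand: $v_2(m-1)=v_2(2^k-2)=1$ since $k\ge 3$ makes $2^{k-1}-1$ odd; $v_2(m+1)=v_2(2^k)=k$; and $v_2(2k)=1$ because $k$ is odd. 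Summing gives $1+k+1-1=k+1$, which is exactly the claim.

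If the author prefers to avoid invoking LTE, an alternative is to factor $m^{2k}-1=(m^k-1)(m^k+1)$ and analyze each factor via the binomial theorem. Expanding
\[
(2^k-1)^k\ =\ \sum_{j=0}^{k}\binom{k}{j}(2^k)^j(-1)^{k-j}
\]
and using that $k$ is odd, every term with $j\ge 2$ is divisible by $2^{2k}$, so
\[
(2^k-1)^k\ \equiv\ -1+k\cdot 2^k\pmod{2^{2k}}.
\]
From this I would read off $(2^k-1)^k-1\equiv -2+k\cdot 2^k$, whose $2$-adic valuation is $1$ (the $-2$ dominates because $k\ge 3$), and $(2^k-1)^k+1\equiv k\cdot 2^k$, whose $2$-adic valuation is $k$ (since $k$ is odd). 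Adding these, $v_2(m^{2k}-1)=1+k=k+1$.

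The main (really only) obstacle is the bookkeeping of the $2$-adic valuations; nothing deeper is needed. Either approach is a short calculation, so I would present the LTE version for brevity and note the binomial-expansion derivation as a self-contained alternative if the paper wishes to avoid citing LTE. The hypothesis $k\ge 3$ is used precisely to guarantee $v_2(2^k-2)=1$ and to make the higher binomial terms negligible modulo $2^{2k}$; the hypothesis that $k$ is odd is used to ensure $v_2(2k)=1$ and to fix the sign of $(-1)^k$.
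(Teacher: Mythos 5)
Your proposal is correct, and your primary route is genuinely different from the paper's. The paper's own proof is a single binomial expansion: it writes $(2^k-1)^{2k}-1=\sum_{i=0}^{2k-1}\binom{2k}{i}(2^k)^{2k-i}(-1)^i$, notes that every term with $i\le 2k-2$ carries a factor $(2^k)^{2}=2^{2k}$ and hence is divisible by $2^{k+2}$, and that the $i=2k-1$ term equals $-2k\cdot 2^k=-k\cdot 2^{k+1}$, which is exactly divisible by $2^{k+1}$ because $k$ is odd. Your main argument via the $p=2$ case of the Lifting the Exponent Lemma reaches the same valuation $v_2(a-1)+v_2(a+1)+v_2(2k)-1=1+k+1-1=k+1$ in one line; this is cleaner but imports a lemma the paper does not otherwise use, so a self-contained version would need the (short) proof of the $p=2$ LTE identity. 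Your fallback, factoring $m^{2k}-1=(m^k-1)(m^k+1)$ and expanding $(2^k-1)^k\equiv -1+k\cdot 2^k\pmod{2^{2k}}$, is essentially the paper's computation repackaged: the paper extracts the dominant term $-k\cdot 2^{k+1}$ from one expansion of the $2k$-th power, while you read off $v_2=1$ and $v_2=k$ from the two factors of the difference of squares separately. All the valuations you compute are right ($v_2(2^k-2)=1$ in fact already for $k\ge 2$), and both of your arguments use the oddness of $k$ exactly where the paper does.
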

\begin{proof}
Write
\begin{align*}
    (2^k-1)^{2k}-1 \ =\ \sum_{i=0}^{2k} \binom{2k}{i}(2^k)^{2k-i}(-1)^i - 1\ =\ \sum_{i=0}^{2k-1}\binom{2k}{i}(2^k)^{2k-i}(-1)^i.
\end{align*}
When $i = 2k-1$, we have the term $-2k\cdot 2^k = -k2^{k+1}$. Because $k$ is odd, $2^{k+1} \ ||\ k2^{k+1}$. This finishes our proof. 
\end{proof}

\begin{lem}\label{cando} The following holds
$$2^{v+k}\ ||\ (2^k-1)^{\beta k} - 1.$$
\end{lem}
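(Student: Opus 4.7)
The statement asks for the exact $2$-adic valuation of $(2^k-1)^{\beta k} - 1$, a classical ``lifting the exponent'' question at the prime $2$. My plan is to leverage Lemma \ref{vs1} (which supplies the case $\beta = 2$) and lift to general $\beta = 2^v\beta_1$ by repeated squaring, combined with a short argument to absorb the odd factor $\beta_1$.

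First, set $x := (2^k-1)^{2k}$ and use $\beta k = 2k \cdot 2^{v-1}\beta_1$ to rewrite
\[
(2^k-1)^{\beta k} - 1 \;=\; x^{2^{v-1}\beta_1} - 1.
\]
By Lemma \ref{vs1}, $x = 1 + 2^{k+1} u$ for some odd $u$; since $k \geq 3$, in particular $x \equiv 1 \pmod{16}$, which is the ``comfortable'' regime for doubling arguments.

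Next, I would dispose of the odd factor $\beta_1$ via the factorization
\[
x^{\beta_1} - 1 \;=\; (x-1)\bigl(1 + x + \cdots + x^{\beta_1 - 1}\bigr),
\]
observing that every summand on the right is $\equiv 1 \pmod{2^{k+1}}$, so the second factor is $\equiv \beta_1 \pmod{2^{k+1}}$ and in particular odd. This gives $2^{k+1}\ ||\ (x^{\beta_1} - 1)$. Then I would iterate the identity $y^2 - 1 = (y-1)(y+1)$: if $2^s\ ||\ (y-1)$ with $s \geq 2$, then $y+1 = 2 + 2^s \cdot (\text{odd}) \equiv 2 \pmod 4$, so $v_2(y+1) = 1$ and each squaring increments the $2$-adic valuation by exactly one. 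Starting at valuation $k+1 \geq 4$ and doubling $v-1$ times yields $2^{k+v}\ ||\ (x^{2^{v-1}\beta_1} - 1)$, which is the desired conclusion.

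The only thing to monitor is that the condition $s \geq 2$ is preserved along the induction, so that the factor $y+1$ contributes exactly one factor of $2$ at every doubling step; this is automatic because $k \geq 3$ provides a generous initial valuation and the valuation only grows. No serious obstacle is anticipated — the technical heart already sits inside Lemma \ref{vs1}, and the lifting step reduces to the standard elementary computation just outlined.
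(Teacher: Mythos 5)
Your proposal is correct and is essentially the paper's proof in a lightly repackaged form: the paper also invokes Lemma \ref{vs1} for the valuation of $(2^k-1)^{2k}-1$, strips off the odd factor $\beta_1$ via the geometric-sum factorization $x^{\beta_1}-1=(x-1)(1+x+\cdots+x^{\beta_1-1})$ with odd second factor, and then gains exactly one factor of $2$ per squaring from $y^2-1=(y-1)(y+1)$ with $y+1\equiv 2\Mod 4$ (the paper phrases this as induction on $v$ rather than explicit iteration). No gaps.
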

\begin{proof}
We prove by induction on $v$. When $v= 1$, write
\begin{align*}
    (2^k-1)^{\beta k} - 1\ =\ (2^k-1)^{2k\beta_1} - 1\ =\ ((2^k-1)^{2k}-1)\sum_{i=1}^{\beta_1} (2^k-1)^{2k(\beta_1-i)}.
\end{align*}
Because the summation is $1\Mod 2$ and by Lemma \ref{vs1}, $2^{k+1}\ ||\ (2^{k}-1)^{2k}-1$, our claim holds for $v = 1$. Inductive hypothesis: suppose that there exists $z\ge 1$ such that the claim holds for all $1\le v\le z$. We show that it holds for $v = z+1$. We have
\begin{align*}
    (2^k-1)^{2^{z+1}\beta_1 k} - 1\ =\ ((2^k-1)^{2^z\beta_1 k}-1)((2^k-1)^{2^z\beta_1 k}+1).
\end{align*}
By the inductive hypothesis, $2^{z+k}\ ||\ (2^k-1)^{2^z\beta_1 k}-1$, so it suffices to show that $2\ ||\ (2^k-1)^{2^z\beta_1 k}+1$. Observe that 
\begin{align*}
    (2^k-1)^{2^z\beta_1 k}+1 \ =\ (4^k-2^{k+1}+1)^{2^{z-1}\beta_1 k} + 1 \equiv 2\Mod 4.
\end{align*}
Hence, $2\ ||\ (2^k-1)^{2^z\beta_1 k}+1$, as desired. This completes our proof. 
\end{proof}

\begin{lem}\label{appr}
Let $m$ be chosen such that $(2^k-1)^m\ ||\ 2^{(2^k-1)k}-1$. Then for all $u\ge 0$,
$$(2^k-1)^{u+m} \ ||\ 2^{(2^k-1)^{u+1}k\alpha_1}-1.$$
\end{lem}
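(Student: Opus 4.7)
The plan is to prove this claim by induction on $u \ge 0$, using only the standard factorization $a^n - 1 = (a-1)(1 + a + \cdots + a^{n-1})$ together with a careful binomial expansion. This is essentially a concrete instance of the Lifting-the-Exponent lemma applied to the odd prime $q := 2^k - 1$ and the congruence $2^k \equiv 1 \Mod{q}$, but I would carry it out directly so that the appendix remains self-contained and matches the elementary style of Lemma \ref{cando}.

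For the base case $u = 0$, I would factor
$$2^{(2^k-1)k\alpha_1} - 1 \ =\ \left(2^{(2^k-1)k}-1\right)\sum_{i=0}^{\alpha_1-1}\left(2^{(2^k-1)k}\right)^i.$$
Each term of the sum is $\equiv 1 \Mod{q}$, so the sum is $\equiv \alpha_1 \Mod{q}$, and the hypothesis $(\alpha_1, q) = 1$ forces the cofactor to be coprime to $q$. Combined with the defining property $q^m\ ||\ 2^{(2^k-1)k}-1$, this gives $q^m\ ||\ 2^{(2^k-1)k\alpha_1}-1$, settling $u=0$.

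For the inductive step, assume $q^{u+m}\ ||\ 2^N - 1$ with $N := (2^k-1)^{u+1}k\alpha_1$, so we can write $2^N = 1 + q^{u+m} A$ with $(A,q) = 1$. Then factor
$$2^{qN} - 1 \ =\ \left(2^N - 1\right)\cdot S, \qquad S \ :=\ \sum_{i=0}^{q-1}(2^N)^i,$$
and expand $(1 + q^{u+m}A)^i$ by the binomial theorem. Summing over $0 \le i \le q-1$, the constant terms contribute exactly $q$; the linear terms contribute $A\, q^{u+m}\cdot \tfrac{q(q-1)}{2}$, whose $q$-valuation is exactly $u+m+1$; and every higher-order binomial term contributes valuation at least $2(u+m)$. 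Since $m \ge 1$ (because $q$ divides $2^{(2^k-1)k}-1$), both correction terms strictly exceed the constant contribution, giving $v_q(S) = 1$. Multiplying, $v_q(2^{qN}-1) = (u+m) + 1 = (u+1) + m$, which closes the induction.

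The main obstacle is the bookkeeping inside the inductive step, and in particular checking that $\tfrac{q(q-1)}{2}$ is an integer of $q$-valuation exactly $1$. This is precisely where the odd-prime nature of $q$ enters: oddness makes $(q-1)/2 \in \Z$, and primality makes $(q-1)/2$ coprime to $q$. Without the odd-prime hypothesis, the linear binomial term would merely have $q$-valuation $u+m$ (the same order as $2^N-1$ itself), and one could no longer conclude $v_q(S) = 1$ rather than something strictly larger; this is the same reason the classical LTE lemma requires an odd prime.
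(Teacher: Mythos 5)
Your proof is correct and takes essentially the same route as the paper's: induction on $u$, the factorization $2^{qN}-1=\left(2^{N}-1\right)\sum_{i=0}^{q-1}(2^{N})^{i}$ with $q=2^k-1$, an identical base case using $(\alpha_1,2^k-1)=1$, and the key point that the cofactor has $q$-valuation exactly one. The only (minor) divergence is in how that last point is verified: the paper reduces each summand modulo $q^{m}$ and leans on a separately proved claim that $m\ge 2$, whereas your binomial expansion of $(1+q^{u+m}A)^{i}$ tracks the valuations directly and needs only $m\ge 1$ — the same lifting-the-exponent idea with slightly lighter hypotheses.
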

\begin{proof}
First, we claim that $m\ge 2$. To prove this, write
$$2^{(2^k-1)k}-1 \ =\ (2^k-1)\sum_{i=2}^{2^k}(2^k)^{(2^k-i)}.$$
Since each term in the summation is congruent to $1\Mod 2^k-1$ and there are $2^k-1$ terms, the summation is divisible by $2^k-1$. Therefore, $(2^k-1)^2\ |\ 2^{(2^k-1)k}-1$.

We are ready to prove the lemma. We proceed by induction. For $u = 0$, write 
\begin{align*}2^{(2^k-1)k\alpha_1}-1&\ =\ (2^{(2^k-1)k}-1)(2^{(2^k-1)k(\alpha_1-1)}+2^{(2^k-1)k(\alpha_1-2)}+\cdots + 1)\\
&\ =\ (2^{(2^k-1)k}-1)\sum_{i=1}^{\alpha_1} (2^k)^{(2^k-1)(\alpha_1-i)}.
\end{align*}
By assumption, $(2^k-1)^m\ ||\ 2^{(2^k-1)k}-1$. Each term in the summation $\sum_{i=1}^{\alpha_1} (2^k)^{(2^k-1)(\alpha_1-i)}$ is congruent to $1\Mod 2^k-1$, so the summation is congruent to $\alpha_1\Mod 2^k-1$. Hence, our lemma holds for $u = 0$. Inductive hypothesis: suppose that there exists $z\ge 0$ such that our lemma holds for all $u\le z$. We show that it holds for $u = z+1$. Write 
\begin{align*}
2^{(2^k-1)^{z+2}k\alpha_1}-1&\ =\ (2^{(2^k-1)^{z+1}k\alpha_1}-1)\cdot \\
&(2^{(2^k-1)^{z+1}k\alpha_1(2^k-2)}+2^{(2^k-1)^{z+1}k\alpha_1(2^k-3)}+\cdots +1)\\
&\ =\ (2^{(2^k-1)^{z+1}k\alpha_1}-1)\sum_{i=2}^{2^k} 2^{(2^k-1)^{z+1}k\alpha_1(2^k-i)}.
\end{align*}
By the inductive hypothesis, $(2^k-1)^{z+m}\ ||\  2^{(2^k-1)^{z+1}k\alpha_1}-1$. Each term in the summation is congruent to $1\Mod (2^k-1)^m$. Since there are $2^k-1$ terms, the summation is congruent to $(2^k-1)\Mod (2^k-1)^m$. Because $m\ge 2$, $(2^k-1)$ exactly divides the summation. So, $$(2^{k}-1)^{z+m+1}\mbox{ exactly divides }2^{(2^k-1)^{z+2}k\alpha_1}-1,$$
as desired. This completes our proof.
\end{proof}
\begin{rek}\label{appr2}\normalfont
Note that for all $k\ge 3$, in order that $(2^k-1)^m\le 2^{(2^k-1)k}-1$, we must have $m< 2^k$. By Lemma \ref{appr}, $(2^k-1)^{u+2^k}$ does not divide $2^{(2^k-1)^{u+1}k\alpha_1}-1$ for all $u\ge 0$.
\end{rek}
\section{Technical proofs used for Lemma \ref{not1mod4}}
We provide proofs of claim(s) made in the proof of Lemma \ref{not1mod4}. Notation from Lemma \ref{not1mod4} is retained here. 
\begin{lem}\label{tv} With notation as in Lemma \ref{not1mod4}, the following holds
$$2^{t+v}\ ||\ p^{2^v\beta_1k}-1.$$
\end{lem}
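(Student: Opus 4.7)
The plan is to prove the claim by induction on $v \ge 0$, exploiting the difference-of-squares factorization
$$p^{2^{v+1}\beta_1 k} - 1 \ =\ \bigl(p^{2^v \beta_1 k} - 1\bigr)\bigl(p^{2^v \beta_1 k} + 1\bigr)$$
to peel off exactly one factor of $2$ at each step. This is essentially a hand-rolled instance of the $p=2$ lifting-the-exponent principle, specialized to the case $4 \mid p-1$.

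For the base case $v = 0$, I would write
$$p^{\beta_1 k} - 1 \ =\ (p-1)\bigl(p^{\beta_1 k - 1} + p^{\beta_1 k - 2} + \cdots + 1\bigr).$$
The second factor is a sum of $\beta_1 k$ odd terms; since both $\beta_1$ and $k$ are odd by hypothesis, the number $\beta_1 k$ of terms is odd, so the sum itself is odd. Hence all of the $2$-adic content is carried by $p - 1$, giving $v_2\bigl(p^{\beta_1 k} - 1\bigr) = v_2(p-1) = t$, as wanted.

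For the inductive step from $v$ to $v+1$, I would use the factorization above. The inductive hypothesis gives $2^{t+v} \,\|\, p^{2^v \beta_1 k} - 1$, so it suffices to show $2 \,\|\, p^{2^v \beta_1 k} + 1$. Here is where the hypothesis $p \equiv 1 \pmod 4$ (equivalently $t \ge 2$) enters critically: any power $p^m$ is $\equiv 1 \pmod 4$, so $p^{2^v \beta_1 k} + 1 \equiv 2 \pmod 4$, contributing exactly one extra factor of $2$. Multiplying the valuations yields $v_2\bigl(p^{2^{v+1}\beta_1 k} - 1\bigr) = t + v + 1$, completing the induction.

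There is no real obstacle in this argument; the calculation is elementary. The only point worth emphasizing is the role of $p \equiv 1 \pmod 4$ — without it, the $+1$-factor in the difference of squares would pick up more than one power of $2$, breaking the bookkeeping. This is exactly why the case $p \equiv 3 \pmod 4$ must be treated separately in the companion Lemma \ref{tv2}.
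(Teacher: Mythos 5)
Your proof is correct and follows essentially the same route as the paper's: induction on $v$ via the factorization $p^{2^{v+1}\beta_1 k}-1 = (p^{2^v\beta_1 k}-1)(p^{2^v\beta_1 k}+1)$, with $p\equiv 1\pmod 4$ forcing the second factor to be exactly divisible by $2$. The only cosmetic difference is that you anchor the induction at $v=0$ with a direct count of odd terms, whereas the paper starts at $v=1$ by reusing the fact $2^t\,\|\,p^k-1$ already established in Lemma \ref{not1mod4}; both are fine.
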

\begin{proof}
We prove by induction on $v$. When $v = 1$, write
\begin{align}p^{2k\beta_1}-1&\ =\ (p^{2k}-1)(p^{2k(\beta_1-1)}+p^{2k(\beta_1-2)} + \cdots + 1)\nonumber\\
&\ =\ (p^k-1)(p^k+1)\sum_{i=1}^{\beta_1}p^{2k(\beta_1-i)}\nonumber\\
&\ =\ (p^k-1)(p+1)\left(\sum_{i=1}^{k} p^{k-i}(-1)^{i+1}\right)\sum_{i=1}^{\beta_1}p^{2k(\beta_1-i)}\label{sl2}.
\end{align}
Since $p+1\equiv 2\Mod 4$, $2\ ||\ (p+1)$. We showed that $2^t||(p^k-1)$ in the proof of Lemma \ref{not1mod4}. Also, the two summations are odd. Therefore, $2^{t+1}\ ||\ p^{2k\beta_1}-1$. 

Inductive hypothesis: suppose that there exists $z\ge 1$ such that our claim holds all $1\le v\le z$. We show that it holds for $v = z+1$. We have
\begin{align*}
p^{2^{z+1}k\beta_1}-1\ =\ p^{(2^zk\beta_1)\cdot 2} - 1\ =\ (p^{2^zk\beta_1} + 1)(p^{2^zk\beta_1} - 1).
\end{align*}
By the inductive hypothesis, $2^{z+t} \ ||\ p^{2^zk\beta_1} - 1$. Also, $p\equiv 1\Mod 4$ implies that $p^{2^zk\beta_1}+1\equiv 2\Mod 4$. So, $2\ ||\ p^{2^zk\beta_1}+1$. Therefore, $2^{z+t+1}\ ||\ p^{2^{z+1}k\beta_1}-1$. We have finished our proof. 
\end{proof}

\section{Technical proofs used for Lemma \ref{not-1mod4}}
We provide proofs of claim(s) made in the proof of Lemma \ref{not-1mod4}. Notation from Lemma \ref{not-1mod4} is retained here. 
\begin{lem}\label{tv2}With notation as in Lemma \ref{not-1mod4}, the following holds
$$2^{v+s-1}\ ||\ p^{k2^v\beta_1}-1.$$
\end{lem}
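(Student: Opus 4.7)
The plan is to prove the lemma by induction on $v$, in direct parallel with the proof of Lemma~\ref{tv}, repeatedly applying the factorization $x^{2m} - 1 = (x^m - 1)(x^m + 1)$ to pass from $v$ to $v+1$.

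For the base case $v = 1$, I would factor
$$p^{2k\beta_1} - 1 \ =\ (p^2 - 1) \sum_{i=0}^{k\beta_1 - 1} p^{2i}.$$
By hypothesis, $2^s\ ||\ p^2 - 1$. The sum has $k\beta_1$ terms, each odd since $p$ is odd, and $k\beta_1$ is itself odd because both $k$ and $\beta_1$ are odd; hence the summation is odd. Therefore $2^s\ ||\ p^{2k\beta_1} - 1$, which is precisely $2^{v+s-1}\ ||\ p^{k\cdot 2^v \beta_1} - 1$ at $v = 1$.

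For the inductive step, suppose the claim holds for some $z \ge 1$, i.e.\ $2^{z+s-1}\ ||\ p^{k\cdot 2^z \beta_1} - 1$. I would then write
$$p^{k\cdot 2^{z+1}\beta_1} - 1 \ =\ \bigl(p^{k\cdot 2^z \beta_1} - 1\bigr)\bigl(p^{k\cdot 2^z \beta_1} + 1\bigr).$$
Since $z \ge 1$, the exponent $k\cdot 2^z \beta_1$ is even; any odd integer $p$ satisfies $p^2 \equiv 1 \Mod 8$, so $p^{k\cdot 2^z \beta_1} \equiv 1 \Mod 4$ and therefore $p^{k\cdot 2^z \beta_1} + 1 \equiv 2 \Mod 4$, which gives $2\ ||\ p^{k\cdot 2^z \beta_1} + 1$. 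Combined with the inductive hypothesis, this produces $2^{(z+1)+s-1}\ ||\ p^{k\cdot 2^{z+1}\beta_1} - 1$, closing the induction.

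I do not anticipate a serious obstacle. The only delicate point is in the base case, where the hypothesis that $k$ is odd is used precisely to keep the count $k\beta_1$ of summands odd, which prevents the summation $\sum_{i=0}^{k\beta_1 - 1} p^{2i}$ from contributing further factors of $2$ beyond those already in $p^2 - 1$. Once this is handled, each subsequent doubling of the exponent contributes exactly one additional factor of $2$ through the congruence $p^{\text{even}} + 1 \equiv 2 \Mod 4$, so the induction runs automatically.
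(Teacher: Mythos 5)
Your proof is correct, and the inductive step is identical to the paper's. The only divergence is in the base case $v=1$: the paper factors $p^{2k\beta_1}-1$ through \eqref{sl2} as $(p^k-1)(p+1)\bigl(\sum_{i=1}^{k}p^{k-i}(-1)^{i+1}\bigr)\sum_{i=1}^{\beta_1}p^{2k(\beta_1-i)}$, and must first establish $2\ ||\ p^k-1$ and then deduce $2^{s-1}\ ||\ p+1$ from $2^s\ ||\ (p-1)(p+1)$, whereas you factor directly as $(p^2-1)\sum_{i=0}^{k\beta_1-1}p^{2i}$ and read off $2^s\ ||\ p^2-1$ from the definition of $s$. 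Your decomposition is slightly cleaner, since it needs only that the $k\beta_1$ odd summands number an odd count (both $k$ and $\beta_1$ being odd); both arguments are valid and yield $2^s\ ||\ p^{2k\beta_1}-1$ as required.
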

\begin{proof}
We prove by induction on $v$. When $v = 1$, by \eqref{sl2}, we only consider $(p+1)(p^k-1)$. We showed that $2\ ||\ p^k-1$ in the proof of Lemma \ref{not-1mod4}. Since $2^s\ ||\ (p-1)(p+1)$ and $2\ ||\ p-1$, it follows that $2^{s-1}\ ||\ p+1$. Therefore, $2^s\ ||\ p^{k2\beta_1}-1$.

Inductive hypothesis: suppose that there exists $z\ge 1$ such that for all $1\le v\le z$, our claim holds. We show that it also holds for $v = z+1$. We have
\begin{align*}
p^{2^{z+1}k\beta_1}-1\ =\ p^{(2^zk\beta_1)\cdot 2} - 1\ =\ (p^{2^zk\beta_1} + 1)(p^{2^zk\beta_1} - 1).
\end{align*}
By the inductive hypothesis, $2^{z+s-1}\ ||\ p^{2^zk\beta_1} - 1$. Also, $p^2\equiv 1\Mod 4$ implies that $p^{2^zk\beta_1}+1\equiv 2\Mod 4$. So, $2\ ||\ p^{2^zk\beta_1}+1$.  Therefore, $2^{z+s} \ ||\ p^{2^{z+1}k\beta_1}-1$. We have finished our proof. 
\end{proof}
\section{Technical proofs used for Lemma \ref{3mod4}}
We provide proofs of claim(s) made in the proof of Lemma \ref{3mod4}. Notation from Lemma \ref{3mod4} is retained here. 
\begin{lem}\label{sl3}
With notation as in Lemma \ref{3mod4}, the following holds
$$2^{\lambda+v}\ ||\ (2^\lambda p_1-1)^{2^v\beta_1}-1.$$
\end{lem}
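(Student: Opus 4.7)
The plan is to prove $2^{\lambda + v} \| p^{2^v \beta_1} - 1$ (writing $p = 2^\lambda p_1 - 1$ with $\lambda \geq 2$) by induction on $v$, following the template of Lemmas \ref{tv} and \ref{tv2}.

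For the base case $v = 1$, I would factor $p^{2\beta_1} - 1 = (p^{\beta_1} - 1)(p^{\beta_1} + 1)$ and compute the $2$-adic valuation of each factor separately. The identity $p^{\beta_1} - 1 = (p - 1)\sum_{i=0}^{\beta_1 - 1} p^i$ reduces the first factor to $v_2(p - 1)$, since the summation is a sum of $\beta_1$ odd terms with $\beta_1$ odd, hence itself odd; the factor $p - 1 = 2(2^{\lambda - 1}p_1 - 1)$ has $v_2 = 1$ because $\lambda \geq 2$. The dual identity $p^{\beta_1} + 1 = (p + 1)\sum_{i=0}^{\beta_1 - 1}(-1)^i p^{\beta_1 - 1 - i}$ reduces the second factor to $v_2(p + 1) = \lambda$, the alternating sum again being odd for the same parity reason. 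Adding yields $v_2(p^{2\beta_1} - 1) = \lambda + 1$, as required.

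For the inductive step, suppose $2^{\lambda + z} \| p^{2^z \beta_1} - 1$ for some $z \geq 1$. I would factor $p^{2^{z+1}\beta_1} - 1 = (p^{2^z \beta_1} - 1)(p^{2^z \beta_1} + 1)$ and invoke the hypothesis on the first factor. For the second, since $z \geq 1$, the integer $p^{2^z \beta_1} = (p^{2^{z-1}\beta_1})^2$ is the square of an odd integer, hence congruent to $1 \Mod 8$; consequently $p^{2^z \beta_1} + 1 \equiv 2\Mod 8$, so $v_2(p^{2^z \beta_1} + 1) = 1$. Combining gives $v_2(p^{2^{z+1}\beta_1} - 1) = \lambda + z + 1$, completing the induction.

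The only real care needed is in the base case: one must verify $v_2(p - 1) = 1$ (which genuinely uses $\lambda \geq 2$, guaranteed by $p \equiv 3\Mod 4$) and confirm the odd parity of the two geometric sums (which uses $\beta_1$ odd). The inductive step is mechanically parallel to Lemmas \ref{tv} and \ref{tv2}, so no new ideas are required beyond the standard observation that odd squares are $\equiv 1 \Mod 8$.
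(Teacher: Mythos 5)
Your proof is correct, and its overall skeleton (induction on $v$, with the inductive step splitting $p^{2^{z+1}\beta_1}-1=(p^{2^z\beta_1}-1)(p^{2^z\beta_1}+1)$ and checking that the second factor is exactly divisible by $2$) coincides with the paper's. The only genuine difference is the base case $v=1$. The paper expands $(2^\lambda p_1-1)^{2\beta_1}-1$ by the binomial theorem and observes that the unique term of minimal $2$-adic valuation is $-2\beta_1\cdot 2^\lambda p_1$, of valuation exactly $\lambda+1$ since $\beta_1$ is odd, while every term with $i\le 2\beta_1-2$ carries at least $(2^\lambda p_1)^2$ and hence valuation at least $2\lambda\ge\lambda+2$. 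You instead factor $p^{2\beta_1}-1=(p^{\beta_1}-1)(p^{\beta_1}+1)$ and compute the two valuations separately as $v_2(p-1)=1$ and $v_2(p+1)=\lambda$, using the oddness of the geometric and alternating sums (each has $\beta_1$ odd summands). Both computations are valid; yours has the advantage of making explicit where the contributions $1$ and $\lambda$ come from and of running parallel to the standard lifting-the-exponent argument already used in Lemmas \ref{tv} and \ref{tv2}, whereas the paper's binomial expansion is more self-contained but leaves the reader to check that the higher-order terms really do not interfere. Your justification of the inductive step (an odd square is $\equiv 1\Mod 8$, so $p^{2^z\beta_1}+1\equiv 2\Mod 8$) is if anything slightly more careful than the paper's appeal to $\lambda\ge 2$.
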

\begin{proof}
We prove by induction on $v$. Observe that 
\begin{align*}
    (2^\lambda p_1-1)^{2\beta_1}-1&\ =\ \sum_{i=0}^{2\beta_1}\binom{2\beta_1}{i}(2^\lambda p_1)^{2\beta_1-i}(-1)^i - 1\\
    &\ =\  \sum_{i=0}^{2\beta_1-1}\binom{2\beta_1}{i}(2^\lambda p_1)^{2\beta_1-i}(-1)^i,
\end{align*}
which clearly indicates that $2^{\lambda+1}\ ||\ (2^\lambda p_1-1)^{2\beta_1}-1$. So, the claim holds for $v = 1$. Inductive hypothesis: suppose that there exists $z\ge 1$ such that for all $1\le v\le z$, the claim holds. We prove that it holds for $v = z+1$. We have
\begin{align*}
    (2^\lambda p_1-1)^{2^{z+1}\beta_1}-1\ =\ ( (2^\lambda p_1-1)^{2^{z}\beta_1}-1)( (2^\lambda p_1-1)^{2^{z}\beta_1}+1).
\end{align*}
By the inductive hypothesis, $2^{\lambda+z} \ ||\ (2^\lambda p_1-1)^{2^{z}\beta_1}-1$. Also, $(2^\lambda p_1-1)^{2^{z}\beta_1}+1\equiv 2\Mod 4$ since $\lambda\ge 2$. Hence, $2^{\lambda+z + 1} \ ||\ (2^\lambda p_1-1)^{2^{z+1}\beta_1}-1$, as desired. 
\end{proof}



\begin{thebibliography}{}

\bibitem{CCZ}
T. X. Cai, D. Y. Chen, and Y. Zhang, Perfect numbers and Fibonacci primes (I), \textit{Int. J. Number Theory} \textbf{11} (2015), 159--169. 

\bibitem{Ch}
F. Chen, On exactly $k$-deficient-perfect numbers, \textit{Integers} \textbf{19} (2019). 

\bibitem{Ji}
X. Jiang, On even perfect numbers, \textit{Colloq. Math.} \textbf{154} (2018), 131--135.

\bibitem{LF}
F. Luca and J. Ferdinands, Sometimes $n$ divides $\sigma_a(n)$, \textit{Amer. Math. Monthly} \textbf{113} (2006), 372--373. 

\bibitem{LL}
Y. B. Li and Q. Y. Liao, A class of new near-perfect numbers, \textit{J. Korean Math Soc.} \textbf{52} (2015), 751--763. 

\bibitem{PS}
P. Pollack and V. Shevelev, On perfect and near-perfect numbers, \textit{J. Number Theory} \textbf{132} (2012), 3037--3046.

\bibitem{P1}
C. Pomerance, Odd perfect numbers are divisible by at least seven distinct primes, \textit{Acta Arith.} \textbf{25} (1974), 265--300.

\bibitem{P2}
C. Pomerance, The second largest prime factor of an odd perfect number, \textit{Math. Comp.} \textbf{29} (1975), 914-- 921.
\bibitem{P3}
G.G. Dandapat, J.L. Hunsucker, and C. Pomerance, Some new results on odd perfect numbers, \textit{Pacific J. Math.} \textbf{57} (1975), 359--364.

\bibitem{RC}
X. Z. Ren and Y. G. Chen, On near-perfect numbers with two distinct prime factors, \textit{Bull. Aust. Math. Soc.} \textbf{88} (2013), 520--524.

\bibitem{TF}
M. Tang and M. Feng, On deficient-perfect numbers, \textit{Bull. Aust. Math. Soc.} \textbf{90} (2014), 186--194.

\bibitem{TMF}
M. Tang, X. Y. Ma, and M. Feng, On near-perfect numbers, \textit{Colloq. Math.} \textbf{144} (2016), 157--188.

\bibitem{TRL}
M. Tang, X. Z. Ren, and M. Li, On near-perfect and deficient-perfect numbers, \textit{Colloq. Math.} \textbf{133} (2013), 221--226.  
\end{thebibliography}
\end{document}